      \newcommand {\al}   {\alpha}          
      \newcommand {\gam } {\gamma}          
      \newcommand {\del}  {\delta}          
              \newcommand {\ve}   {\varepsilon}
                 \newcommand {\vphi} {\varphi}
      \newcommand {\lam}  {\lambda}         \newcommand {\Lam}  {\Lambda}
                \newcommand {\Om}  {\Omega}
      \newcommand {\pl}   {\partial}        
           \newcommand {\UUU}  {{\cal U}}
                 \newcommand {\RRR}  {{\mathbb R}}
      \newcommand {\FFF}  {{\cal F}}        
           \newcommand {\NNN}  {\mathbb{N}}
       \newcommand {\JJJ}  {J}
      \newcommand {\xxx}   {\upsilon_1}          \newcommand {\yyy}  {\upsilon_2}
      \newcommand {\xxyy}   {\upsilon}           \newcommand {\KKK}   {K}
     \newcommand {\beq}  {\begin{equation}}
      \newcommand {\eeq}  {\end{equation}}  
     \newcommand {\beqo}  {\begin{equation*}}
      \newcommand {\eeqo}  {\end{equation*}}
      \newtheorem{theorem}{Theorem}
      \newtheorem{lemma}{Lemma}
      \newtheorem{zam}{Remark}
      \newtheorem{opr}{Definition}
      \newtheorem{corollary}{Corollary}
      \newtheorem{primer}{Example}
\author{Alexander Plakhov\thanks{Center for R\&{}D in Mathematics and Applications, Department of Mathematics, University of Aveiro, Portugal and Institute for Information Transmission Problems, Moscow, Russia.}}
\title{Method of nose stretching in Newton's \\ problem of minimal resistance} 
\begin{document}

\maketitle

\begin{abstract}
We consider the problem $\inf\big\{ \int\!\!\int_\Omega (1 + |\nabla u(x,y)|^2)^{-1} dx dy : \text{ the function } u : \Omega \to \mathbb{R} \text{ is concave and } 0 \le u(x,y) \le M \text{ for all } (x,y) \in \Omega =\{ (x,y): x^2 + y^2 \le 1 \} \, \big\}$ (Newton's problem) and its generalizations.
In the paper \cite{BrFK} it is proved that if a solution $u$ is $C^2$ in an open set $\mathcal{U} \subset \Omega$ then $\det D^2u = 0$ in $\mathcal{U}$. It follows that graph$(u)\rfloor_\mathcal{U}$ does not contain extreme points of the subgraph of $u$.

In this paper we prove a somewhat stronger result. Namely, there exists a solution $u$ possessing the following property. If $u$ is $C^1$ in an open set $\mathcal{U} \subset \Omega$ then graph$(u\rfloor_\mathcal{U})$ does not contain extreme points of the convex body $C_u = \{ (x,y,z) :\, (x,y) \in \Omega,\ 0 \le z \le u(x,y) \}$. As a consequence, we have $C_u = \text{\rm Conv} (\overline{\text{\rm Sing$C_u$}})$, where Sing$C_u$ denotes the set of singular points of $\partial C_u$. We prove a similar result for a generalized Newton's problem.
\end{abstract}

\begin{quote}
{\small {\bf Mathematics subject classifications:} 52A15, 52A40, 49Q10}
\end{quote}

\begin{quote} {\small {\bf Key words and phrases:}
Convex bodies, Newton's problem of minimal resistance, surface area measure, Blaschke addition, the method of nose stretching.}
\end{quote}

\section{Introduction}

\subsection{History of the problem}

Isaac Newton in his {\it Principia} (1687) considered the following mechanical model. A solid body moves with constant velocity through a rarefied medium composed of point particles. The particles are initially at rest, and all collisions of particles with the body's surface are perfectly elastic.\footnote{Actually, Newton considered a one-parameter family of laws of reflection. Namely, a parameter $0 \le k \le 1$ is fixed, and in a reference system connected with the body, at each impact, the normal component of the particle's velocity of incidence is multiplied by $-k$, while the tangential component remains unchanged. In the case $k = 1$ we have the law of perfectly elastic (billiard) reflection.}

The medium is assumed to be extremely rarefied, so as mutual interaction of the medium particles can be neglected. In physical terms, this means that the free path length of particles is much larger than the size of the body. As a real-world application, one can imagine an artificial satellite with well polished surface moving around the Earth at low altitudes (between 100 and 1000 km) when the atmosphere is extremely thin (but is still present).

As a result of collisions of the body with the particles, the force of resistance is created, which acts on the body and slows down its velocity. Newton calculated the resistance of several geometrical shapes (a cylinder, a cone, a sphere) and considered the following problem: minimize the resistance in the class of convex bodies that are rotationally symmetric with respect to a line parallel to the direction of motion and have fixed length along the direction of motion and fixed maximal width in the orthogonal direction.

Newton described the solution of this problem in geometrical terms (see, e.\,g., the book \cite{TBook}). The solution looks like a truncated cone with slightly inflated lateral surface; see Fig.~\ref{figNew} for the case when the length is equal to the maximal width.
        \begin{figure}[h]
\centering
\includegraphics[scale=0.45]{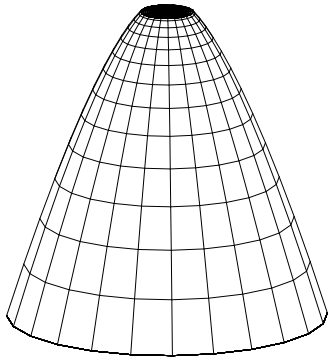}
\caption{A solution to the rotationally symmetric Newton problem.}
\label{figNew}
\end{figure}
The body in the picture moves in the medium vertically upward;  equivalently, one can assume that the body is at rest and there is an incident flow of particles moving vertically downward.


In modern terms the problem can be stated as follows. Choose a reference system with the coordinates $x$, $y$, $z$ connected with the body so as the $z$-axis coincides with the body's axis of symmetry and is counter directional to the flow of particles. Let the front part of the body be the graph of a radial function $z = \vphi(r)$, $r = \sqrt{x^2 + y^2}$, $0 \le r \le L$. Since the body is convex, the function $\vphi$ is concave and monotone non-increasing.

The resistance equals $2\pi\rho v^2 R(\vphi)$, where $\rho$ is the density of the medium, $v$ is the scalar velocity of the body, and
$$
R(\vphi) = \int_0^L \frac{1}{1 + \vphi'(r)^2}\, r\, dr.
$$
The values $\rho$ and $v$ are fixed, so the problems is as follows: minimize $R(\vphi)$ in the class of concave monotone non-increasing functions $\vphi : [0,\, L] \to \RRR$ such that $0 \le \vphi(r) \le M$. Here $M$ and $2L$ are, respectively, the fixed length and maximal width of the body.

The new life was given to the problem in 1993 when the paper by Buttazzo and Kawohl \cite{BK} was published. The authors considered the problem of minimal resistance in various classes, in particular in the class of convex (not necessarily symmetric) bodies and in the class of nonconvex bodies satisfying the so-called single impact condition. Since then, many research papers have been published in this area.

In the papers \cite{BFK,BrFK,BG97,CLR03,LP1,LZ}, the problem in various classes of convex bodies have been studied. In particular, in \cite{LP1,LZ} there are considered classes of bodies that are convex hulls of a certain pair of planar convex curves. In \cite{BK,BFK,MMOP}, the problem in some classes of (generally) nonconvex bodies are considered. In \cite{surveyK,surveyB}, surveys of the current state of the problem are given.

The problem for rotationally symmetric bodies is studied under the additional conditions that the so-called arclength is fixed \cite{BW}; there is a friction in the course of body-particle interaction \cite{friction}; the thermal motion of the medium particles is present \cite{temp}.

In \cite{CL1,CL2,CL3,SIREV,AkopPl,Nonl2016}, the problem is studied in classes of nonconvex bodies satisfying the condition that each particle of the flow hits the body only once. Connection of this problem with Besicovitch's solution of the Kakeya needle problem \cite{Bes} have been found in \cite{SIREV,Nonl2016}.

In \cite{Canadian}, \cite{bookP} the techniques of the theory of billiards are used to study the problem in classes of nonconvex bodies with multiple body-particle collisions allowed. The problems of resistance optimization for bodies that perform both translational and rotational motion are studied in \cite{PG,ARMA,SIMArough,PTG,OMT}. In these studies, methods of optimal mass transport theory \cite{Villani,MC} are used, and applications to geometrical optics and mechanics (invisibility \cite{0-resist,bookP,invisibility}, Magnus effect \cite{PTG}, retroreflectors \cite{tube,retro}, camouflaging \cite{camouflage}) are found.

Suppose that the body is convex and a reference system connected with the body with the coordinates $x$, $y$, $z$ is chosen so as the $z$-axis is parallel and counter directional to the direction of the flow. Let the front part of the body's surface be the graph of a concave function $u : \Om \to \RRR$, where $\Om \subset \RRR^2$ is the projection of the body on the $xy$-plane. Then the vertical component of resistance is equal to $2\rho v^2 \FFF[u]$, where $\FFF[u]$ is given by the formula
\beq\label{problem classical}
\FFF[u] = \int\!\!\!\int_\Om \frac{1}{1 + |\nabla u(x_1,x_2)|^2}\, dx_1\, dx_2.
\eeq
In what follows we assume that the numerical coefficient $2\rho v^2$ equals 1. The value in \eqref{problem classical} will be called {\it resistance}.

The earliest and the most direct, and perhaps the most difficult generalization of the original problem stated by Newton is as follows: find the body of minimal resistance in the class of convex bodies with fixed length along the direction of motion and fixed projection on the plane orthogonal to this direction. The only difference from the original problem is that the body is in general not rotationally symmetric (as a consequence, the orthogonal projection of the body is not necessarily a ball). In terms of the function $u$, the problem reads as follows \cite{BK}:
\vspace{1mm}

{\bf Generalized Newton's problem.} {\it Given $M > 0$ and a convex body\footnote{A convex body is a compact convex set with nonempty interior.} $\Om \subset \RRR^2$, find the minimum of the functional \eqref{problem classical} in the class of concave functions $u : \Om \to \RRR$ satisfying the condition $0 \le u(x_1,x_2) \le M$ for all $(x_1,x_2) \in \Om$.}
\vspace{1mm}

The following is known about a solution $u$ to this problem.

{\bf P$\mathbf{_1}$.} There exists at least one solution \cite{BFK}.

{\bf P$\mathbf{_2}$.} Let $\nabla u$ exist at a point $(x,y) \in \Om$; then either $|\nabla u(x,y)| \ge 1$ or $|\nabla u(x,y)| = 1$ \cite{BFK}.

{\bf P$\mathbf{_3}$.} If $u$ is $C^2$ in a neighborhood of $(x,y)$ and $0 < u(x,y) < M$ then the matrix of the second derivatives $D^2u(x,y)$ has a zero eigenvalue.

{\bf P$\mathbf{_4}$.} The top level set $\{ u(x,y) = M \}$ is neither empty nor a singleton. Therefore, it is either a line segment or a planar convex body (G. Buttazzo, personal communication).

{\bf P$\mathbf{_5}$.} If all points of the curve $\pl\Om$ are regular (for example, $\Om$ is a circle) then $u\rfloor_{\pl\Om} = 0$ \cite{boundary}.

Numerical study of the problem in the case when $\Om$ is a circle has been carried out in the papers \cite{LO,W}.   


Numerical results suggest that the graph of a solution $u$ is a piecewise developable surface; more precisely, singular points of the graph form several curves, and the graph is the union of line segments with the endpoints on these curves. Property P$_3$ is an argument in favor of this conjecture. Unfortunately, $u$ may not be $C^2$ in any open set.

In the present paper we prove that if the curve $\pl C$ is regular, then the graph of a certain solution is formed by line segments with the endpoints in the closure of the set of singular points of the graph (Corollary \ref{cor3}). It means that the solution is defined uniquely by the set of singular points, and therefore, to study the properties of the solution, it suffices to concentrate on its singular points.

Our result is equivalent to property P$_3$ with $C^2$ substituted by $C^1$. Unfortunately, we still cannot guarantee that the set of singular points is closed. We cannot even affirm that this set is not dense in the graph. We believe that the following conjecture is true.
\vspace{1mm}

{\bf Conjecture.} {\it The set of singular points of a solution is closed, and therefore, is nowhere dense.}
\vspace{1mm}

The main result of this paper is Theorem \ref{T main} (subsection \ref{subs12}) stated for a further generalization of Newton's problem and proved in Section \ref{sec Tmain}, and Corollary \ref{cor3} is its simple consequence.
In general, we believe that it is fruitful to work with a generalized version of the problem, and that further progress can be achieved by using methods of convex geometry, including the notion of surface area measure.

\subsection{Statement of the results and discussion}\label{subs12}

Let us imagine again an artificial satellite at a low-Earth orbit. It is known that the thermal velocity of molecules in the  atmosphere is comparable with the satellite's velocity and therefore cannot be neglected. Moreover, the interaction of molecules with the satellite's surface by no means obeys the law of elastic reflection. This implies that formula \eqref{problem} for resistance may not be valid. Besides, not only the front (exposed to the flow) part of the satellite's surface, but also its rear part should be taken in consideration.

In order to deal with the problem in this general setting, it is more natural and convenient to work with convex bodies, rather than with concave functions.

First introduce the notation. A {\it convex body}  is a compact convex set with nonempty interior. In this paper, the letter $C$ (also with some subscripts or superscripts) is always used to designate a convex body in $\RRR^3$. A point $\xi \in \pl C$ is called {\it singular} if there is more than one plane of support to $C$ at $\xi$, and {\it regular} otherwise. The set of singular points of $\pl C$ is denoted by Sing$C$. It is known that almost all points of $\pl C$ are regular. The outward unit normal to $C$ at a regular point $\xi \in \pl C$ is denoted by $n_\xi$. If a plane of support at $\xi$ is unique (and therefore $\xi$ is regular), it is called the {\it tangent plane} at $\xi$.

A point $x \in C$ is an {\it extreme point} of $C$, if it is not a convex combination $x = \lam a + (1-\lam) b$,\, $a,\, b \in C$,\, $a \ne b$,\, $0 < \lam < 1$. The set of extreme points of $C$ is denoted as Ext$C$. The convex hull of a set $A$ is denoted as Conv$A$. A plane of support to a convex body is always assumed to be oriented by the outward normal vector.

\begin{opr}
Let $D \subset \pl C$ be a Borel set. The {\rm surface area measure} of $D$ is the measure $\nu_D$ in $S^2$ defined by
$$
\nu_D(A) = \text{Leb}\big( \{ \xi \in D : \ n_\xi \in A \} \big)
$$
for any Borel set $A \subset S^2.$ Here Leb means the standard Lebesgue measure (area) on $\pl C$. In the particular case when $D = \pl C$, the corresponding surface area measure is denoted as $\nu_{\pl C}$.
\end{opr}

If $D$ is not a planar set, the measure $\nu_D$ does not depend on the choice of the convex body $C$ whose boundary contains $D$. If $D$ is planar, the measure depends on the choice of the normal to $D$ ($n$ or $-n$); in what follows it will always be clear, which normal should be chosen.

Let $f : S^2 \to \RRR$ be a continuous function and let $D \subset \pl C$ be a Borel subset of a convex body $C \subset \RRR^3$. We define the functional
\beqo\label{Functional}
F(D) = \int_{D} f(n_\xi)\, d\xi,
\eeqo
where $d\xi$ denotes the standard 2-dimensional Lebesgue measure on $\pl C$. Making a change of variable, one can write this functional as
$$F(D) = \int_{S^2} f(n)\, d\nu_D(n).$$
Again, if $D$ is not planar, the value $F(D)$ does not depend on $C$, and if $D$ is planar, the choice of the normal, and therefore the value $F(D)$, will always be uniquely defined.

Let two convex sets $C_1 \subset C_2 \subset \RRR^3$ be fixed. We consider the problem: 
\beq\label{problem}
\text{Minimize }  \, F(\pl C) = \int_{\pl C} f(n_\xi)\, d\xi \, \text{ in the class of convex bodies $C_1 \subset C \subset C_2$.}
\eeq

It is known that for any continuous function $f$ and each pair of convex sets $C_1 \subset C_2$, this problem has at least one solution; see the paper \cite{BG97}.

This statement of the problem can be interpreted as follows. In the real-life case, the pressure of the flow at a point $\xi \in \pl C$ depends only on the slope of the surface at $\xi$, that is, equals $p(n_\xi)\, n_\xi$, where $p$ is a continuous function defined on $S^2$. The projection of the drag force on the $z$-axis equals $F(\pl C) = \int_{\pl C} f(n_\xi)\, d\xi$, where $f(n) = p(n)\, n_3$ for $n = (n_1, n_2, n_3) \in S^2$.

The condition $C_1 \subset C \subset C_2$ can also be reasonably interpreted. Suppose we are given a metal body occupying the domain $C_2 \setminus C_1$ and are going to remove a part of material of the body to produce the optimal streamlined shape when moving in a certain direction. The resulting shape is $C \setminus C_1$, where $C$ satisfies the above condition.

\begin{primer}
Consider the cylinder $C_2 = \Om \times [0,\, M]$ and its bottom $C_1 = \Om \times \{ 0 \}$, where $\Om \in \RRR^2$ is a convex body and $M > 0$. Then each body $C$ satisfying the condition $C_1 \subset C \subset C_2$ is bounded below by $C_1$ and above by the graph of a concave function $u : \Om \to \RRR$ satisfying $0 \le u \le M$, that is,
$$
C = C_u = \{ (x,y,z) :\, (x,y) \in \Om,\ 0 \le z \le u(x,y) \}.
$$
The body's boundary $\pl C$ is the union of the disc $C_1$, a part of the cylindrical boundary $\pl \Om \times [0,\, M]$, and the graph of $u.$
Let $f(n) = p(n)\, n_3$, where $p$ is a function on $S^2$. Then the integral $\int f(n_\xi)\, d\xi$ over the cylindrical boundary of $C$ is zero, the integral over the disc $C_1$ is constant, and the integral over the graph of $u$ (after the change of variable $\xi \rightsquigarrow x_1, x_2$, taking into account that $d\xi = \sqrt{1 + u_x'^2 + u_y'^2}\, dx_1 dx_2$) equals
\beqo\label{pru}
\FFF[u] = \int\!\!\!\int_\Om g(\nabla u(x_1,x_2))\, dx_1 dx_2,
\eeqo
where
$$
g(\xxx,\yyy) = p\Big( \frac{1}{\sqrt{1+\xxx^2+\yyy^2}}(-\xxx, -\yyy, 1) \Big).
$$
Therefore problem \eqref{problem} is equivalent to the problem of minimization of the functional $\FFF$ in the class of concave functions $u : \Om \to \RRR$ satisfying the condition $0 \le u(x_1,x_2) \le M$ for all $(x_1,x_2)\in \Om$.
\end{primer}

\begin{primer}
Generalized Newton's problem of minimization of the functional in \eqref{problem classical} corresponds to the case when $f(n) = (n_3)_+^3 = p(n) n_3$ with $p(n) = (n_3)_+^2$,\,$C_2$ is the cylinder and $C_1$ is its bottom, $C_2 = \Om \times [0,\, M]$,\, $C_1 = \Om \times \{ 0 \}$. Here $\Om$ is the unit circle, $z_+ = \max \{ 0,\, z \}$ means the positive part of $z$, and $M > 0$ is the parameter of the problem. This functional is relevant in  the model where the absolute temperature of the medium is 0 and the body-particle collisions are perfectly elastic (billiard-like).
\end{primer}

\begin{zam}
In general the solution to problem \eqref{problem} may not be unique. For example, suppose that $C_1 = \Om \times \{ 0 \}$,\, $C_2 = \Om \times [0,\, 1]$, $f > 0$ in a small neighborhood of $(0,0,1)$, and $f = 0$ outside this neighborhood. Then the minimal value of the functional is 0 and it is attained at a family of bodies of the form Conv$(C_1 \cup \{ (a,b,c) \})$, $(a,b) \in \Om$, with $c \le 1$ being sufficiently close to 1.
\end{zam}

\begin{zam}\label{zam numerics}
In generalized Newton's problem \eqref{problem classical} with the circular base, $\Om = \{ x_1^2 + x_2^2 \le 1 \}$, the numerical study \cite{LO,W} seems to indicate that there exists a sequence of values $+\infty = M_1 > M_2 > M_3 > \ldots$ converging to zero such that for $M_k < M < M_{k-1},\, k = 2,\, 3,\ldots$ the solution is unique (up to a rotation about the $z$-axis)  and the top level set $\{ u(x,y) = M \}$ is a regular $k$-gon, and for each value $M = M_k$ there are two distinct solutions with the top level sets being a regular $k$-gon and a regular $(k+1)$-gon.
\end{zam}

Let $g : \RRR^2 \to \RRR$ be a continuous function, $\Om \subset \RRR^2$ be a convex body, and $M$ be a positive value. Consider the following problem:
\beq\label{f1}
\text{ Minimize the functional} \quad \FFF[u] = \int\!\!\!\int_\Om g(\nabla u(x_1,x_2))\, dx_1 dx_2
\eeq
in the class of concave functions $u : \Om \to \RRR$ satisfying the condition $0 \le u(x) \le M$ for all $x = (x_1, x_2) \in \Om$.

Denote by $D^2u(x)$ the matrix of second derivatives (whenever it exists),
$$D^2u(x) = \left[\! \begin{array}{cc}
u''_{x_1x_1}(x) & u''_{x_1x_2}(x)\\ u''_{x_2x_1}(x) & u''_{x_2x_2}(x)
\end{array} \!\right].$$

The following statement holds true.

\begin{theorem}\label{T brfk}
Let $g$ be of class $C^2$ and the matrix $D^2g$ have at least one negative eigenvalue for all values of the argument. Assume that $u$ is a solution of problem \eqref{f1} and there is an open set $\UUU \subset \Om$ such that

(a) $u \in  C^2(\UUU)$,

(b) $u(x) < M$  for all $x \in \UUU$.\\
Then $\det D^2u(x) = 0$ for all $x \in \UUU$.
\end{theorem}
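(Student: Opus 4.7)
The plan is to argue by contradiction using the first and second variations of $\FFF$ along a short-wavelength test function. Suppose there is a point $x_0 \in \UUU$ with $\det D^2u(x_0) \ne 0$. Concavity of $u$ forces $D^2u(x_0) \preceq 0$, so the hypothesis promotes this to strict negative definiteness. By continuity of $u \in C^2(\UUU)$, I pick a ball $B_r = B_r(x_0) \subset \UUU$ on which $D^2u \preceq -\del I$ for some $\del > 0$. A strict negative definite Hessian forbids $u$ from attaining the value $0$ in $B_r$ (a zero would force $D^2u \succeq 0$ there), so after shrinking $r$ I may also assume $\eta \le u \le M - \eta$ on $B_r$ for some $\eta > 0$.

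The first step is to extract an Euler-Lagrange identity. For any $\phi \in C_c^2(B_r)$, both $u \pm \ve\phi$ are admissible for all sufficiently small $\ve > 0$: the margin $\del$ absorbs $\ve\|D^2\phi\|_\infty$, preserving concavity, while the height constraint is automatic. Minimality therefore yields
\[
\int\!\!\!\int_{B_r} \nabla g(\nabla u)\cdot\nabla\phi\,dx = 0 \qquad\text{for every } \phi \in C_c^2(B_r).
\]

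The second step exploits the negative eigenvalue of $D^2g$. Pick $x_1 \in B_r$, let $\om$ be a unit negative eigenvector of $D^2g(\nabla u(x_1))$ with eigenvalue $\lam < 0$, and choose a smaller ball $B' \subset B_r$ on which $\langle D^2g(\nabla u(x))\om,\om\rangle \le \lam/2$. Fix a nonnegative bump $\chi \in C_c^\infty(B')$ and take
\[
\phi_\ve(x) = c_0\ve^2\chi(x)\sin((\om\cdot x)/\ve),
\]
so that $\|\phi_\ve\|_\infty = O(\ve^2)$, $\|\nabla\phi_\ve\|_\infty = O(\ve)$, and
\[
D^2\phi_\ve(x) = -c_0\chi(x)\om\om^T \sin((\om\cdot x)/\ve) + O(\ve).
\]
Choosing $c_0|\om|^2\|\chi\|_\infty < \del$ ensures $D^2(u+\phi_\ve) \preceq 0$ for all small enough $\ve$, while the height bound is immediate.

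Expanding $\FFF[u+\phi_\ve]$, the first-order term vanishes by the Euler-Lagrange identity, the cubic remainder is $O(\|\nabla\phi_\ve\|_{L^3}^3) = O(\ve^3)$, and the quadratic term is dominated by the $O(\ve)$ piece $c_0\ve\chi(x)\om\cos((\om\cdot x)/\ve)$ of $\nabla\phi_\ve$. Weak convergence $\cos^2((\om\cdot x)/\ve) \rightharpoonup 1/2$ then gives
\[
\FFF[u+\phi_\ve] - \FFF[u] = \frac{c_0^2\ve^2}{4}\int\!\!\!\int_{B'}\chi^2\langle D^2g(\nabla u)\om,\om\rangle\,dx + o(\ve^2) < 0
\]
for small $\ve$, contradicting optimality of $u$. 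The main obstacle is the scaling calibration: the amplitude must be small enough ($O(\ve^2)$) for the unavoidable $O(1)$-size of $D^2\phi_\ve$ to fit inside the margin $\del$ granted by strict negative definiteness, yet the gradient must oscillate with macroscopic amplitude ($O(\ve)$) so that its quadratic contribution beats the cubic remainder; bookkeeping the $o(\ve^2)$ averaging error and the lower-order pieces of $\nabla\phi_\ve$ is routine but slightly delicate.
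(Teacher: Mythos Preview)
Your argument is correct and follows essentially the same route as the paper: both exploit strict negative definiteness of $D^2u$ near $x_0$ to permit two-sided compactly supported perturbations, and then contradict non-negativity of the second variation by an oscillatory test function aligned with the negative eigenvector of $D^2g(\nabla u)$ (the paper separates this into a blow-up step freezing coefficients followed by a one-dimensional oscillation, whereas you do it in a single family $\phi_\ve$). One small caveat: your remainder bound $O(\|\nabla\phi_\ve\|_{L^3}^3)=O(\ve^3)$ tacitly assumes $g\in C^3$; with only $g\in C^2$ the Taylor remainder is merely $o(\ve^2)$ uniformly, which is still all you need.
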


This theorem was proved in \cite{BrFK} (Theorem 2.1 and Remark 3.4) in the particular case when $g(\xxyy) = 1/(1+|\xxyy|^2)$. In the general case the proof is basically the same. For the reader's convenience, it is provided in Section \ref{sec Tbrfk}.

The statement of Theorem \ref{T brfk} implies that the gaussian curvature at each point of the surface $\{ (x, u(x)) : x \in \UUU \}$ equals zero, and therefore, the surface is developable. It follows that no point of the surface is an extreme point of the body
$$
C_u = \{ (x,y,z) :\, (x,y) \in \Om,\ 0 \le z \le u(x,y) \}.
$$
In other words, we have the following

\begin{corollary}\label{cor1}
Under the assumptions of Theorem \ref{T brfk} we have
$$
\text{\rm Ext} C_u \cap \{ (x, u(x)) : x \in \UUU \} = \emptyset.
$$
\end{corollary}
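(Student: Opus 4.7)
The plan is to combine Theorem \ref{T brfk} with the classical developability of $C^2$ surfaces of vanishing Gaussian curvature. By Theorem \ref{T brfk}, $\det D^2u(x) = 0$ for all $x \in \UUU$, and concavity of $u$ forces $D^2u(x) \le 0$, so $D^2u(x)$ has rank at most one throughout $\UUU$. Equivalently, the Gaussian curvature
$$K(x) \;=\; \frac{\det D^2u(x)}{(1+|\nabla u(x)|^2)^2}$$
of the graph $S = \{(x,u(x)) : x \in \UUU\} \subset \pl C_u$ vanishes identically.

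The key step is to show that for every $x_0 \in \UUU$ there exist a nonzero $v \in \RRR^2$ and $\delta > 0$ with $x_0 + tv \in \UUU$ for all $|t|\le \delta$ and with $u$ affine on the segment $\{x_0 + tv : |t|\le \delta\}$. This is the local ruling of a $C^2$ surface of zero Gaussian curvature (Hartman--Nirenberg). When $D^2u(x_0)$ has rank one, $\ker D^2u(x)$ defines a continuous line field in a neighborhood of $x_0$; an integral curve $\gamma$ of this field satisfies
$$\tfrac{d}{dt}\nabla u(\gamma(t)) \;=\; D^2u(\gamma(t))\,\gamma'(t) \;=\; 0,$$
so $\nabla u$ is constant along $\gamma$. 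The affine function $\phi(x) = u(\gamma(0)) + \nabla u(\gamma(0))\cdot(x-\gamma(0))$ is then tangent to the graph all along $\gamma$, and by concavity $\phi$ is a global upper bound for $u$. Hence the zero set $Z = \{u = \phi\}$ is convex (as the maximum set of the concave function $u - \phi$), contains $\gamma$, and $u$ is affine on $Z$, yielding the required straight segment through $x_0$. Points where $D^2u(x_0) = 0$ (flat points) are reduced to the previous case, either by a limiting argument using nearby rank-one points or by invoking the flat-point version of Hartman--Nirenberg directly.

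Once the segment has been produced, its lift $L = \{(x_0+tv,\, u(x_0+tv)) : |t|\le \delta\}$ is a genuine line segment in $\RRR^3$ contained in $S \subset \pl C_u \subset C_u$, and $P_0 := (x_0, u(x_0))$ lies in its relative interior. Therefore $P_0$ is a nontrivial convex combination of two distinct points of $C_u$, so $P_0 \notin \text{\rm Ext}\,C_u$; since $x_0 \in \UUU$ is arbitrary, this proves Corollary \ref{cor1}. The only nontrivial ingredient is the local ruling step; the main obstacle in making it fully rigorous is the behavior at flat points where $D^2u$ vanishes, but this is entirely classical surface-theory content, and every other step is a direct consequence of concavity and the definition of $C_u$.
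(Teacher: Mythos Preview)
Your argument is correct and follows the same route the paper takes: the paper simply remarks that $\det D^2u=0$ on $\UUU$ forces zero Gaussian curvature, hence the graph is developable, hence it contains no extreme points of $C_u$. You have written out this implication in detail, invoking Hartman--Nirenberg for the ruling and using concavity to pass from ``$\nabla u$ constant along an integral curve'' to ``$u$ affine on a straight segment through $x_0$'' via the convex contact set $Z=\{u=\phi\}$; this is exactly the content behind the paper's one-line assertion.
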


The following question still remains. Suppose that an open subset of the lateral boundary of an optimal body does not contain singular points. Is it true that it does not contain extreme points (and therefore, is developable)?

Note that in this question only $C^1$ (rather than $C^2$) smoothness of the surface is a priori assumed.

We shall prove that the answer to this question is positive, even in the case of more general problem \eqref{problem}, for at least one solution of the problem.     

The main result of this paper is the following theorem.

\begin{theorem}\label{T main}
There is a solution $\widehat C$ to problem \eqref{problem} such that
\beq\label{eq property}
\text{\rm Ext}\widehat C \subset \pl C_1 \cup \pl C_2 \cup \overline{\text{\rm Sing}\widehat C}.
\eeq
Here and in what follows, the bar means closure.

Moreover, if a solution $C$ does not satisfy \eqref{eq property} then the set of solutions is extremely degenerate; namely, there is a family of solutions $\{ C(\vec s): \ \vec s = (s_i)_{i\in\mathbb{N}} \in [0,\, 1]^\infty \}$, where $\vec 0 = (0,\, 0, \ldots)$, $\vec 1 = (1,\, 1, \ldots)$, such that $C(\vec 0) = C$ and $\widehat C = C(\vec 1)$ satisfies \eqref{eq property}. The corresponding family of surface area measures is linear and infinite dimensional; that is, 
there is a linearly independent set of signed measures $\nu_i,\, i \in \mathbb{N}$ such that for all $\vec s \in [0,\, 1]^\infty$,
$$
\nu_{\pl C(\vec s)} = \nu_{\pl C} + \sum_i s_i \nu_i.
$$
\end{theorem}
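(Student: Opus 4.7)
The plan is to implement the \emph{nose stretching} method promised in the title. The local operation I would construct takes a solution $C$ and a ``bad'' extreme point $\xi\in\text{Ext}\,C\setminus(\pl C_1\cup\pl C_2\cup\overline{\text{Sing}\,C})$, and produces a one-parameter family of solutions $C_\xi(s)$, $s\in[0,1]$, with $C_\xi(0)=C$, whose surface area measures depend affinely on $s$, such that $\xi$ is no longer extreme for $C_\xi(1)$. Running this construction simultaneously on countably many bad extreme points gives the full family $C(\vec s)$, and a Hausdorff-limit argument produces the distinguished solution $\widehat C$.

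For the local step, note that since $\xi$ is regular and extreme, its unique tangent plane $\Pi_\xi$ meets $C$ only at $\xi$ (strict exposure). Intersecting $C$ with a thin slab parallel to $\Pi_\xi$ isolates a cap $K\subset C$ containing $\xi$, whose boundary splits into a curved surface $S\subset\pl C$ consisting of regular points with unit normals in a small spherical cap $\omega\subset S^2$ around $n_\xi$, and a planar base $D\subset\Pi$. By shrinking $K$ we may assume $K\subset\text{int}\,C_2$, $K\cap C_1=\emptyset$, and $S\cap\overline{\text{Sing}\,C}=\emptyset$. I would then consider signed measures $\tilde\nu$ on $\omega$ of small total variation satisfying the linear constraints
\[
\int_\omega n\,d\tilde\nu(n)=0 \quad\text{and}\quad \int_\omega f(n)\,d\tilde\nu(n)=0.
\]
By Minkowski's existence theorem applied to $\nu_S+\tilde\nu$ together with a delta of the appropriate mass at $-n_\xi$, such a $\tilde\nu$ corresponds to a convex cap $K_{\tilde\nu}$ with the same planar base $D$; gluing $K_{\tilde\nu}$ in place of $K$ inside $C$ yields a convex body $C_{\tilde\nu}$ with $C_1\subset C_{\tilde\nu}\subset C_2$, surface area measure $\nu_{\pl C}+\tilde\nu$, and (by the second constraint) unchanged value of $F$. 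Choosing $\tilde\nu_\xi$ that transfers mass from a neighborhood of $n_\xi$ to lateral normals ``sharpens'' the cap so that its apex becomes a line segment or ridge; then $s\tilde\nu_\xi$, $s\in[0,1]$, yields the required one-parameter family along which $\xi$ ceases to be an extreme point at $s=1$.

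To assemble $\widehat C$, I would enumerate a countable collection of bad extreme points $\{\xi_i\}$ whose associated caps $K_i$ can be taken pairwise disjoint and whose union is dense in $\text{Ext}\,C\setminus(\pl C_1\cup\pl C_2\cup\overline{\text{Sing}\,C})$. Performing the construction simultaneously on the disjoint $K_i$ produces a family $C(\vec s)$ with $\nu_{\pl C(\vec s)}=\nu_{\pl C}+\sum_i s_i\tilde\nu_{\xi_i}$, the $\tilde\nu_{\xi_i}$ being linearly independent because of their disjoint supports. Hausdorff-compactness of $\{C:\,C_1\subset C\subset C_2\}$, continuity of $F$, and weak-$*$ continuity of $C\mapsto\nu_{\pl C}$ allow passage to the limit $\vec s=\vec 1$, giving $\widehat C$ as a solution. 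Property \eqref{eq property} is then verified by showing that any point of $\text{Ext}\,\widehat C\setminus(\pl C_1\cup\pl C_2)$ must be a limit of points treated by some $\tilde\nu_{\xi_i}$ (by density of the chosen $\xi_i$) and that each such limit lands in $\overline{\text{Sing}\,\widehat C}$, since the nose-stretched caps acquire ridges of singular points.

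The main obstacle is the local step: showing that a signed perturbation $\tilde\nu$ of the surface area measure of $K$ genuinely corresponds to a convex cap glueable to $C\setminus K$ along $\pl D$ without breaking convexity at the seam, and that some admissible $\tilde\nu_\xi$ actually kills $\xi$ as an extreme point. This relies on an openness/stability statement for Minkowski's problem for caps with fixed planar base, and on an explicit construction of $\tilde\nu_\xi$ — perhaps as a difference of measures concentrated on narrow slivers of $\omega$ — that yields a convex cap strictly steeper than the original at $n_\xi$. A secondary delicate point is ensuring that the countable iteration and the passage to the Hausdorff limit do not introduce new bad extreme points outside $\overline{\text{Sing}\,\widehat C}$; this forces a careful choice of the dense family $\{\xi_i\}$ and exploits the closure in \eqref{eq property} to absorb the necessary limit points.
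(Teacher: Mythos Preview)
Your approach diverges from the paper's and has a genuine gap at exactly the place you flag as the main obstacle. You rely on Minkowski's existence theorem to realize $\nu_S+\tilde\nu$ (plus a delta at $-n_\xi$) as a convex cap with the \emph{same} planar base $D$, but Minkowski only determines a body up to translation from its surface area measure; a face with the correct area and normal need not have the correct shape, so $K_{\tilde\nu}$ need not glue convexly to $C\setminus K$ along $\partial D$. This is not a technicality: a ``Minkowski problem with prescribed base'' is not a standard tool and would itself need a proof. More importantly, you impose $\int_\omega f\,d\tilde\nu=0$ as a constraint and never use the optimality of $C$ in the local step. You then owe an explicit $\tilde\nu_\xi$ that satisfies all four linear constraints \emph{and} makes $\xi$ non-extreme; ``transfer mass so the apex becomes a ridge'' is a picture, not a construction, and even if a ridge appears there is no reason $\xi$ lies in its relative interior.

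The paper sidesteps all of this by building the family purely geometrically: pick a point $O\in C_2\setminus C$ close to $\xi$ and set $C(s)=\bigcup_{\sqrt{1-s}\le\lambda\le1}(\lambda C+(1-\lambda)O)$, so $C(0)=C$ and $C(1)=\text{Conv}(C\cup\{O\})$. The surface area measures are automatically affine in $s$, hence $s\mapsto F(\partial C(s))$ is affine on $[0,1]$. The decisive idea is that the same formula defines admissible bodies for small \emph{negative} $s$ as well (the nose is pushed in), and one shows the two-sided derivative $\frac{d}{ds}\big|_{s=0}F(\partial C(s))$ exists; since $C(0)$ minimizes over a two-sided family, this derivative vanishes, forcing $F$ to be constant on $[0,1]$. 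Thus optimality \emph{produces} the relation $\int f\,d\nu_0=0$ rather than your having to engineer it. For the global step the mechanism is also different: the paper does not make each bad $\xi_i$ individually non-extreme, but plants the apices $O_j$ (which are singular in $\widehat C$) densely near the bad set $\text{Ext}\,C\setminus(\partial C_1\cup\partial C_2\cup\overline{\text{Sing}\,C})$, so that any extreme point of $\widehat C=\text{Conv}(C\cup\{O_j\})$ off $\partial C_1\cup\partial C_2$ is a limit of the $O_j$ and hence lies in $\overline{\text{Sing}\,\widehat C}$.
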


This theorem will be proved in Section \ref{sec Tmain}.

Using the Krein-Milman theorem, one immediately obtains the following corollary of Theorem \ref{T main}.

\begin{corollary}\label{cor2}
There is at least one solution $C$ to problem \eqref{problem} satisfying the equality
$$
C = \text{\rm Conv} \Big( \overline{\text{\rm Sing$C$}} \cup \big(\pl C \cap \pl C_1 \big) \cup \big(\pl C \cap \pl C_2 \big) \Big).
$$
\end{corollary}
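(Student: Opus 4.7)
The plan is to apply Minkowski's theorem (the finite-dimensional version of Krein--Milman) to the distinguished solution $\widehat C$ provided by Theorem \ref{T main}. Since the paper has already done the heavy lifting by producing a solution whose extreme points are localized to the right-hand side of \eqref{eq property}, only a short combinatorial cleanup is needed.

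First I would take $\widehat C$ satisfying
$$
\text{Ext}\,\widehat C \subset \pl C_1 \cup \pl C_2 \cup \overline{\text{Sing}\,\widehat C}.
$$
Every extreme point of a convex body lies on its boundary, so $\text{Ext}\,\widehat C \subset \pl \widehat C$. Intersecting the inclusion above with $\pl \widehat C$, and using that $\overline{\text{Sing}\,\widehat C} \subset \pl \widehat C$ (singular points are boundary points by definition, and $\pl\widehat C$ is closed in $\RRR^3$), I obtain
$$
\text{Ext}\,\widehat C \subset \overline{\text{Sing}\,\widehat C} \cup \bigl(\pl \widehat C \cap \pl C_1\bigr) \cup \bigl(\pl \widehat C \cap \pl C_2\bigr) =: S.
$$

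Next I would invoke Minkowski's theorem for the compact convex body $\widehat C \subset \RRR^3$, which gives $\widehat C = \text{Conv}(\text{Ext}\,\widehat C)$. Monotonicity of the convex hull under inclusion then yields $\widehat C \subset \text{Conv}(S)$. The reverse inclusion is immediate, because $S \subset \widehat C$ and $\widehat C$ is convex. This establishes the equality of the corollary with $C = \widehat C$.

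There is essentially no obstacle: the nontrivial content sits entirely inside Theorem \ref{T main}. The only point that deserves a moment of care is the sharpening from $\pl C_1, \pl C_2$ in \eqref{eq property} to $\pl \widehat C \cap \pl C_1, \pl \widehat C \cap \pl C_2$ in the corollary, which is handled by the elementary observation $\text{Ext}\,\widehat C \subset \pl \widehat C$.
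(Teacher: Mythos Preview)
Your proposal is correct and matches the paper's own approach: the paper simply states that Corollary~\ref{cor2} follows from Theorem~\ref{T main} by Krein--Milman, and you have supplied exactly those details, including the small sharpening from $\pl C_1,\ \pl C_2$ to $\pl\widehat C \cap \pl C_1,\ \pl\widehat C \cap \pl C_2$ via the observation $\text{Ext}\,\widehat C \subset \pl\widehat C$.
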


As applied to problem \eqref{f1}, we obtain the following statement (compare with Theorem \ref{T brfk}).

\begin{corollary}\label{cor4}
Let the function $g$ be continuous. Then there exists a solution $u$ to problem \eqref{f1}  possessing the following property. If there is an open set $\UUU \subset \Om$ such that

(a) $u \in C^1(\UUU)$,

(b) $u(x) < M$ for all $x \in \UUU$,

 then the surface graph$(u\rfloor_\UUU)$ does not contain extreme points of the subgraph of $u$.
\end{corollary}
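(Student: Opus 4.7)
The plan is to derive Corollary \ref{cor4} from Theorem \ref{T main} through the reduction in Example 1. I set $C_1 := \Om \times \{0\}$, $C_2 := \Om \times [0, M]$, and choose a continuous $f : S^2 \to \RRR$ that agrees with $(n_1, n_2, n_3) \mapsto g(-n_1/n_3, -n_2/n_3)\, n_3$ on the open upper hemisphere. By Example 1, every admissible concave $u : \Om \to \RRR$ gives a body $C_u$ with $C_1 \subset C_u \subset C_2$ for which $F(\pl C_u) = \FFF[u] + \text{const}$; conversely, every convex body $C$ with $C_1 \subsetneq C \subset C_2$ has the form $C_u$ for some admissible $u$. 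Hence solutions of \eqref{problem} correspond to (non-trivial) solutions of \eqref{f1}. Applying Theorem \ref{T main} produces a solution $\widehat C = C_u$ satisfying $\text{Ext}\,\widehat C \subset \pl C_1 \cup \pl C_2 \cup \overline{\text{Sing}\,\widehat C}$, and I will verify that this $u$ has the required property.

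Fix $(x_0, y_0) \in \UUU$, with $u \in C^1(\UUU)$ and $u < M$ on $\UUU$, and write $p := (x_0, y_0, u(x_0, y_0))$. In the main case $u(x_0, y_0) > 0$, continuity of $u$ and openness of $\UUU \subset \mathrm{int}\,\Om$ give an open planar neighborhood $W \subset \UUU$ of $(x_0, y_0)$ on which $0 < u < M$. Then $V := \{(x, y, u(x, y)) : (x, y) \in W\}$ is a relatively open subset of $\pl C_u$: any point of $\pl C_u$ sufficiently close to a $q \in V$ must have its height equal to $u$ of its planar projection (heights $0$ and $M$, and the cylindrical sides, are ruled out by the strict inequalities $0 < u < M$ and $(x,y) \in \mathrm{int}\,\Om$), and the projection then lies in $W$. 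In particular $V \cap (\pl C_1 \cup \pl C_2) = \emptyset$. The $C^1$ hypothesis gives a unique supporting plane to $C_u$ at every point of $V$ (namely the graph's tangent plane, which bounds $C_u$ from above by concavity of $u$), so $V \cap \text{Sing}\,\widehat C = \emptyset$; since $V$ is open in $\pl \widehat C$, the closed set $\pl \widehat C \setminus V$ contains $\text{Sing}\,\widehat C$ and hence $\overline{\text{Sing}\,\widehat C}$, giving $V \cap \overline{\text{Sing}\,\widehat C} = \emptyset$. Combining, $p$ avoids the right-hand side of \eqref{eq property}, and so $p \notin \text{Ext}\,\widehat C$ by Theorem \ref{T main}.

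The degenerate sub-case $u(x_0, y_0) = 0$ is handled separately: $C^1$-smoothness together with $u \ge 0$ gives $\nabla u(x_0, y_0) = 0$, and concavity then forces $u \equiv 0$ on $\Om$, in which case the flat ``subgraph'' $\Om \times \{0\}$ has all its extreme points lying over $\text{Ext}\,\Om \subset \pl \Om$, disjoint from $\UUU$. The substantive work in Corollary \ref{cor4} is done by Theorem \ref{T main}; the only mildly non-trivial step in the derivation is the openness-plus-closure argument that boots pointwise $C^1$-regularity up to exclusion from $\overline{\text{Sing}\,\widehat C}$, and I expect this to be the main (but quite manageable) obstacle.
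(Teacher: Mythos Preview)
Your proposal is correct and follows exactly the route the paper intends: Corollary~\ref{cor4} is stated there without proof, as an application of Theorem~\ref{T main} through the dictionary of Example~1, and you have supplied the verification carefully (in particular the openness argument promoting $V\cap\text{Sing}\,\widehat C=\emptyset$ to $V\cap\overline{\text{Sing}\,\widehat C}=\emptyset$, and the handling of the degenerate case $u(x_0,y_0)=0$).

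One small technical point is worth flagging. For the identity $F(\pl C_u)=\FFF[u]+\text{const}$ to hold, the contribution of the lateral boundary $\pl\Om\times[0,u]$ must be independent of $u$, which forces $f\equiv 0$ on the equator of $S^2$; combined with your requirement that $f$ be continuous and agree with $g(-n_1/n_3,-n_2/n_3)\,n_3$ on the open upper hemisphere, this tacitly imposes $g(v)=o(|v|)$ as $|v|\to\infty$. For general continuous $g$ the reduction as written does not go through. The fix is painless: the nose-stretching argument in Lemma~\ref{l stretching} only uses continuity of $f$ on a compact subset of the \emph{open} upper hemisphere (the point $O$ is chosen near a regular extreme point away from $\pl C_1\cup\pl C_2$, so all relevant outward normals have $n_3$ bounded away from $0$), and therefore applies directly to problem~\eqref{f1} without ever passing through a globally continuous $f$. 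The paper glosses over this point as well.
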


The following statement concerns the more specific Newton's problem.

\begin{corollary}\label{cor3}
Let the boundary $\pl\Om$ be regular. Then there is a solution $u$ to generalized Newton's problem such that $C_u = \{ (x,y,z) :\, (x,y) \in \Om,\ 0 \le z \le u(x,y) \}$ satisfies the equation
\beq\label{ConvSing}
C_u = \text{\rm Conv} (\overline{\text{\rm Sing$C_u$}}).
\eeq
\end{corollary}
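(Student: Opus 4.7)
The plan is to derive Corollary~\ref{cor3} as a direct specialization of Corollary~\ref{cor2} to the generalized Newton setting $C_1 = \Om \times \{0\}$, $C_2 = \Om \times [0,M]$. Corollary~\ref{cor2} then produces a solution of the shape $C_u$ (as in the first example above) satisfying
$$
C_u = \text{Conv}\Big( \overline{\text{Sing}\, C_u} \cup (\pl C_u \cap \pl C_1) \cup (\pl C_u \cap \pl C_2) \Big),
$$
so to get \eqref{ConvSing} it suffices to show that the last two sets are already contained in $\text{Conv}(\overline{\text{Sing}\, C_u})$. The crucial extra input, and the reason we need the hypothesis of Corollary~\ref{cor3}, is property P$_5$: regularity of $\pl\Om$ forces $u\rfloor_{\pl\Om} = 0$. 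In particular, the graph of $u$ touches the bottom along the full curve $\pl\Om \times \{0\}$, and the lateral cylindrical face $\pl\Om \times (0,M]$ of $\pl C_2$ is disjoint from $\pl C_u$.

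First I would treat the bottom face $\pl C_u \cap \pl C_1 = \Om \times \{0\}$. At every point $(x,y,0)$ with $(x,y) \in \pl\Om$, the plane $z = 0$ clearly supports $C_u$; on the other hand, since $u$ attains the value $M > 0$ somewhere in $\Om$ (by P$_4$), concavity of $u$ combined with $u\rfloor_{\pl\Om} = 0$ produces a second, strictly non-horizontal, supporting plane at $(x,y,0)$. Hence $\pl\Om \times \{0\} \subset \text{Sing}\, C_u$, and convexity of $\Om$ gives $\Om \times \{0\} = \text{Conv}(\pl\Om \times \{0\}) \subset \text{Conv}(\overline{\text{Sing}\, C_u})$.

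Next I would treat $\pl C_u \cap \pl C_2$. By the disjointness noted above, the only contribution beyond the already handled bottom comes from the top face $T = \{(x_1,x_2,M) :\, u(x_1,x_2) = M\}$. By P$_4$, $T$ is either a line segment or a planar convex body in the plane $z = M$. At every relative boundary point of $T$, the horizontal plane $z = M$ supports $C_u$, while concavity of $u$ together with the fact that $u$ strictly decreases from $M$ along some outward direction produces a second supporting plane with strictly negative vertical component. Thus the relative boundary of $T$ lies in $\text{Sing}\, C_u$, and $T$ is the convex hull of that relative boundary. Plugging both inclusions back into the formula above collapses the right-hand side to $\text{Conv}(\overline{\text{Sing}\, C_u})$, which is precisely \eqref{ConvSing}.

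The main obstacle, I expect, is the careful verification that each point of $\pl\Om \times \{0\}$ and each relative boundary point of $T$ actually carries two distinct planes of support. Both assertions are intuitive consequences of concavity of $u$ together with the fact, guaranteed by P$_4$, that $u$ is nontrivial, but they must be written with enough care to handle the degenerate cases (e.g., $T$ a line segment, $T$ meeting $\pl\Om$, or boundary points of $T$ that coincide with boundary points of a lower level set).
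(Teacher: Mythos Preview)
Your proposal is correct and follows essentially the same route as the paper's proof: both start from Corollary~\ref{cor2}, invoke P$_5$ to force $u\rfloor_{\pl\Om}=0$, and then argue that $\pl\Om\times\{0\}$ and the relative boundary of the top level set $T$ lie in $\text{Sing}\,C_u$, so that the $\pl C_1$- and $\pl C_2$-contributions are absorbed into $\text{Conv}(\overline{\text{Sing}\,C_u})$. The only cosmetic difference is that the paper phrases the reduction via extreme points and Krein--Milman, whereas you show directly that the two boundary pieces sit inside $\text{Conv}(\overline{\text{Sing}\,C_u})$; your phrase ``strictly negative vertical component'' for the second supporting plane at $\pl T$ should read ``non-horizontal'' (the outward normal still has positive $z$-component), but the intended argument is the same.
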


It follows that the boundary of the body $C_u$ is composed of line segments with the endpoints in the closure of the set of singular points. Yet, we cannot exclude the possibility that the set of singular points is dense in graph$(u)$, and therefore, all line segments contained in graph$(u)$ degenerate to singletons.

\begin{proof}
According to Corollary \ref{cor2}, $C_u = \text{\rm Conv} \Big( \overline{\text{\rm Sing$C_u$}} \cup \big(\pl C_u \cap \pl C_1 \big) \cup \big(\pl C_u \cap \pl C_2 \big) \Big),$ hence
$$
\text{Ext}C_u \subset \overline{\text{\rm Sing$C_u$}} \cup \big( \pl C_u \cap \pl C_1 \big) \cup \big(\pl C_u \cap \pl C_2 \big) $$
$$
\subset \overline{\text{\rm Sing$C_u$}} \cup \big( \text{Ext}C_u \cap \pl C_u \cap \pl C_1 \big) \cup \big( \text{Ext}C_u \cap \pl C_u \cap \pl C_2 \big).
$$
Recall that in generalized Newton's problem, $C_1 = \Om \times \{ 0 \}$ and $C_2 = \Om \times [0,\, M]$. Any extreme point of $C_u$ that lies in $\pl C_u \cap \pl C_1$, also belongs to the circumference $\pl\Om \times \{ 0 \}$, and therefore, to {\text{\rm Sing$C_u$}}. On the other hand, using property P$_5$, one sees that any extreme point of $C_u$ that lies in $\pl C_u \cap \pl C_2$, also belongs to the union of the circumference $\pl\Om \times \{ 0 \}$ and the boundary of the top level set $\pl \{ (x,y) : u(x,y) = M \}$. Both these sets, in turn, belong to {\text{\rm Sing$C_u$}}. Thus, one has
$$
\text{Ext}C_u \cap C_u \cap \pl C_1 \subset \text{\rm Sing$C_u$} \qquad \text{and} \qquad \text{Ext}C_u \cap C_u \cap \pl C_2 \subset \text{\rm Sing$C_u$},
$$
hence by the Krein-Milman theorem,
$$
C_u = \text{Conv}\big(\overline{\text{Ext}C_u}\big) $$
$$
\subset \text{Conv}\Big( \overline{\text{\rm Sing$C_u$}} \cup \big( \text{Ext}C_u \cap \pl C_u \cap \pl C_1 \big) \cup \big( \text{Ext}C_u \cap \pl C_u \cap \pl C_2 \big) \Big) = \text{\rm Conv} (\overline{\text{\rm Sing$C_u$}}).$$
The inverse inclusion $\text{\rm Conv} (\overline{\text{\rm Sing$C_u$}}) \subset C_u$ is obvious.
\end{proof}

   \begin{zam}
If the numerical observations described in Remark \ref{zam numerics} are true, and thus, there are only one or two distinct (up to a rotation) solutions to generalized Newton's problem with $\Om = \{ x_1^2 + x_2^2 \le 1 \}$, then each solution $u$ satisfies \eqref{ConvSing}.

In fact, numerical study seems to indicate that the optimal body is indeed the convex hull of the union of several curves composed of singular points: the circumference $\pl\Om \times \{ 0 \}$, the boundary of a regular polygon in the horizontal plane $\{ z = M \}$, and several convex curves 
joining each vertex of the polygon with a point of the circumference.
   \end{zam}

\section{Proof of Theorem \ref{T brfk}}\label{sec Tbrfk}

Assume the contrary, that is, there is a point $x_0 \in \UUU$ such that $D^2u(x_0) > 0$. Changing if necessary the orthogonal system of coordinates, one can assume that $x_0 = (0,0)$. For $\del > 0$ and $c > 0$ sufficiently small one has $D^2u(x) \ge c$ and $u''_{x_1x_1}(x) \le -c$ when $|x| \le \del$, and additionally, the circle $|x| \le \del$ is contained in $\UUU$.

Take a $C^2$ function $h : \RRR^2 \to \RRR$ equal to zero outside the circle $|x| \le \del$. For $|t|$ sufficiently small, $D^2\big(u(x)+th(x)\big) > 0$ and $u''_{x_1x_1}(x) + th''_{x_1x_1}(x) < 0$, and therefore, the function $u + th$ is concave. Besides, taking $|t|$ sufficiently small, one can ensure that $0 < u(x) + th(x) < M$ for all $x$.

Since $u$ minimizes the functional $\FFF$, we have
$$
\frac{d^2}{dt^2}\Big\rfloor_{t=0} \FFF[u + th] = \frac 12 \int\!\!\!\int_{\RRR^2} \nabla h(x)^T D^2g(\nabla u(x)) \nabla h(x)\, dx_1 dx_2
\ge 0
$$
(we represent the gradient as a row vector, $\nabla h = (h_x, h_y)$).

Now taking $h(x) = \phi(x/\ve)$, where $0 < \ve < 1$ and $\phi$ is a $C^2$ function vanishing outside the circle $|x| \le \del$, and making the change of variable $x = \ve y$, one obtains
$$
\int\!\!\!\int_{\RRR^2} \nabla\phi(y) D^2g(\nabla u(\ve y)) \nabla\phi(y)^T dy_1 dy_2 \ge 0.
$$

Passing to the limit $\ve \to 0$ one gets
$$
\int\!\!\!\int_{\RRR^2} \nabla\phi(y) D^2g(\nabla u(0)) \nabla\phi(y)^T dy_1 dy_2 \ge 0.
$$

Take one more change of variables $y = \Lam\chi$, where $\Lam$ is an orthogonal matrix with $\det\Lam = 1$ diagonalizing the matrix $D^2g(\nabla u(0))$, that is,
$$
\Lam^T D^2g(\nabla u(0)) \Lam = \left[\! \begin{array}{cc} a & 0\\ 0 & -b \end{array} \!\right] \quad \text{with} \ \ b > 0.
$$
Denoting $\psi(\chi) = \phi(\Lam\chi)$ and taking into account that $\nabla\psi(\chi) = \nabla\phi(\Lam\chi) \Lam$, one comes to the inequality
$$
\int\!\!\!\int_{\RRR^2} \nabla\psi(\chi) \left[\! \begin{array}{cc} a & 0\\ 0 & -b \end{array} \!\right] \nabla\psi(\chi)^T d\chi_1 d\chi_2
$$ \beq\label{2int}
= a \int\!\!\!\int_{\RRR^2} (\psi_{\chi_1}'(\chi))^2\, d\chi_1 d\chi_2 - b \int\!\!\!\int_{\RRR^2} (\psi_{\chi_2}'(\chi))^2\, d\chi_1 d\chi_2 \ge 0.
\eeq

Now let $\psi(\chi) = \psi(\chi, \tau) = \gam(\chi_1) \gam(\chi_2) \sin(\chi_2/\tau)$, where $\gam : \RRR \to \RRR$ is a smooth function vanishing outside a small neighborhood of 0 and $\tau \ne 0$. One easily checks that the former integral is bounded for all $\tau$,
$$
\int\!\!\!\int_{\RRR^2} (\psi_{\chi_1}'(\chi))^2\, d\chi_1 d\chi_2 \le \int \gam'^2(\chi_1)\, d\chi_1 \int \gam^2(\chi_2)\, d\chi_2,
$$
whereas the latter one goes to infinity as $\tau \to 0$,
$$
\int\!\!\!\int_{\RRR^2} (\psi_{\chi_2}'(\chi))^2\, d\chi_1 d\chi_2 = \frac{1}{\tau^2} \int \gam^2(\chi_1)\, d\chi_1 \int \gam^2(\chi_2) \cos^2(\chi_2/\tau)\, d\chi_2 + O(1/\tau)
$$
$$
\to +\infty \quad \text{as} \ \ \tau \to 0.
$$
It follows that the left hand side in \eqref{2int} is negative for $|\tau|$ sufficiently small. The contradiction finishes the proof.

\section{Proof of Theorem \ref{T main}}\label{sec Tmain}

The main idea of the proof consists in the procedure which is called {\it stretching the nose} and is described in Lemma \ref{l stretching}.

Namely, suppose that the body $C$ is a solution to problem \eqref{problem} but does not satisfy \eqref{eq property}. Take a point $\xi \in \text{Ext}C \setminus (\pl C_1 \cup \pl C_2 \cup \overline{{\text{\rm Sing$C$}}})$, then choose a point $O$ outside $C$ sufficiently close to $\xi$, and define a family of bodies $C(s)$, $0 \le s \le 1$ with the endpoints at $C(0) = C$ and $C(1) = \text{Conv}(C \cup \{ O \})$. The corresponding family of measures $\nu_{\pl C(s)}$ is a line segment. We prove that all bodies $C(s)$ are solutions to problem \eqref{problem}. This is the main point in the proof of Theorem \ref{T main}.

We also prove that if the set $\text{\rm Ext}C \setminus (\pl C_1 \cup \pl C_2 \cup \overline{{\text{\rm Sing$C$}}})$ is not empty, then it does not have isolated points, and therefore, is infinite (Lemma \ref{l isolated} and Corollary \ref{cor isolated}). Using this fact, in Lemma \ref{l cones} we find an infinite sequence $\xi_1,\, \xi_2, \ldots$ of points dense in this set  and a sequence of points $O_1,\, O_2, \ldots$ outside $C$ (each point $O_j$ is sufficiently close to $\xi_j$) so as the "noses" $\text{Conv}(C \cup \{ O_j \}) \setminus C$ are mutually disjoint and the body $\widehat C = \text{Conv}(C \cup \{ O_1,\, O_2, \ldots \})$ satisfies \eqref{eq property}. We define a family of intermediate convex bodies $C(\vec s)$, $\vec s \in [0,\, 1]^\infty$, with $C(\vec 0) = C$ and $C(\vec 1) = \widehat C$, such that $C(\underbrace{0, \ldots, 0,\, 1}_{j}, 0, \ldots) = \text{Conv}(C \cup \{ O_j \})$ and the corresponding family of surface area measures is linear, and prove that all bodies of the family are solutions to problem \eqref{problem}.

Denote by  $B_r(a)$ the open ball with radius $r$ and with the center at $a$. Let $C$ be a convex body.

\begin{lemma}\label{l isolated}
The set $\text{\rm Ext}C \setminus \overline{{\text{\rm Sing$C$}}}$ does not have isolated points.
\end{lemma}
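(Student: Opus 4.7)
The plan is to argue by contradiction. Suppose $\text{Ext}\,C \setminus \overline{\text{Sing}\,C}$ contains an isolated point $\xi_0$, and choose an open neighborhood $U$ of $\xi_0$ in $\partial C$ that is disjoint from $\overline{\text{Sing}\,C}$ and contains no extreme points of $C$ other than $\xi_0$.

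A preliminary reduction strengthens ``extreme'' to ``exposed'' via Straszewicz's theorem, which asserts that every extreme point of $C$ is a limit of exposed points. Any exposed point in $U$ is in particular extreme, so equals $\xi_0$, and therefore the approximating sequence of exposed points must be eventually constant at $\xi_0$; hence $\xi_0$ is itself exposed. Because $\xi_0$ is regular, its unique tangent plane $T_{\xi_0}$ is the exposing hyperplane, so $T_{\xi_0} \cap C = \{\xi_0\}$. Pass to local tangent-plane coordinates ($\xi_0 = 0$, $T_{\xi_0} = \{z=0\}$, $C$ below): then $\partial C \cap U$ is the graph of a concave function $u \colon B_\delta \to \mathbb{R}$ with $u(0)=0$, $\nabla u(0)=0$ and, by the exposure property, $u < 0$ on $B_\delta \setminus \{0\}$. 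Since every point of $U$ is regular, $u$ is differentiable on $B_\delta$, and because a concave function differentiable on an open set is automatically $C^1$ there, $u \in C^1(B_\delta)$. The isolation hypothesis now translates to: each $p \in B_\delta \setminus \{0\}$ lies in the relative interior of a convex face $F_p = \{q \in B_\delta : u(q) = u(p) + \nabla u(p)\cdot(q-p)\}$ of dimension at least one, on which $\nabla u$ is constantly $v_p := \nabla u(p) \ne 0$. In particular $0 \notin F_p$ for any $p \ne 0$.

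The heart of the argument is a level-curve contradiction. Choose $\eta > 0$ small enough that (i) the superlevel set $K_\eta = \{u \ge -\eta\}$ is compactly contained in $B_{\delta/2}$, and (ii) for every $p \in K_\eta$ the three-dimensional exposed face of $C$ at $(p, u(p))$ lies inside $U$; the second condition is achievable because $T_{\xi_0} \cap C = \{\xi_0\}$ and the exposed-face map is upper semicontinuous at $\xi_0$, so nearby tangent planes cut $C$ in small sets near $\xi_0$. The level set $\partial K_\eta = \{u = -\eta\}$ is then a $C^1$ Jordan curve by the implicit function theorem. The claim is that $\partial K_\eta$ is locally a straight segment at every point. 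Fix $p \in \partial K_\eta$. If $F_p$ is one-dimensional and transversal to $\partial K_\eta$ at $p$, then moving along $F_p$ from $p$ in the direction of increasing $u$ reaches an ``inner endpoint'' $p_+ \in \{u > -\eta\} \subset B_{\delta/2}$; by (ii) this $p_+$ is a genuine endpoint of the three-dimensional face through $(p,u(p))$, hence extreme in that face and therefore extreme in $C$, so $p_+ = \xi_0 = 0$, contradicting $0 \notin F_p$. Consequently $F_p$ is either one-dimensional and tangent to $\partial K_\eta$ (so $F_p \subset \partial K_\eta$ is a straight segment containing $p$ in its relative interior) or two-dimensional (so $F_p$ contains a planar neighborhood of $p$ on which $u$ is affine); in either case $\partial K_\eta$ coincides with a straight segment on a neighborhood of $p$.

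A $C^1$ closed curve that is locally a straight segment at every point has locally, and hence by connectedness globally, constant tangent vector, so it must lie in a single line, which cannot be a closed curve. This contradiction completes the proof. The main technical obstacle is the combined use of Straszewicz's theorem and upper semicontinuity of the exposed-face map to secure both the strict inequality $u < 0$ on $B_\delta \setminus \{0\}$ and the fact that the ``inner endpoint'' in the transversal sub-case is an honest extreme point of the three-dimensional face of $C$; once these two preparatory statements are in place, the face analysis on $\partial K_\eta$ is a direct case check.
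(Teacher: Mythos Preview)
Your argument is essentially correct, but there is one small gap and the overall route is far more elaborate than the paper's.

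\textbf{The gap.} You assert that each $p\in B_\delta\setminus\{0\}$ lies in the \emph{relative interior} of the exposed face $F_p=\{q:u(q)=u(p)+\nabla u(p)\cdot(q-p)\}$. Regularity of $(p,u(p))$ pins down the supporting hyperplane $T_p$, but it does not force $(p,u(p))$ into the relative interior of $C\cap T_p$; a regular non-extreme boundary point can sit on the relative boundary of a $2$-dimensional exposed face (think of a flat facet glued $C^1$-smoothly onto a strictly convex cap). In that situation your sentence ``$F_p$ contains a planar neighborhood of $p$'' fails. The fix is immediate: replace $F_p$ by the \emph{minimal} face of $C$ through $(p,u(p))$. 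Then $p$ is in its relative interior by definition, this face is contained in $C\cap T_p$ and hence in $U$ by your condition (ii), $\nabla u$ is still constant on it (so $0$ is excluded), and your tangent/transversal dichotomy together with the ``inner endpoint is extreme in a face, hence extreme in $C$'' step goes through verbatim.

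\textbf{Comparison with the paper.} The paper's proof is a few lines of pure convex geometry, with no Straszewicz, no $C^1$ chart, no level curves, and no face-map semicontinuity. If $\xi$ is isolated in $\text{Ext}\,C\setminus\overline{\text{Sing}\,C}$, one can shrink $\varepsilon$ so that $B_\varepsilon(\xi)\setminus\{\xi\}$ meets neither $\text{Ext}\,C$ nor $\overline{\text{Sing}\,C}$. Minkowski's theorem then gives
\[
C=\text{Conv}(\text{Ext}\,C)\subset\text{Conv}\!\big[\text{Conv}(C\setminus B_\varepsilon(\xi))\cup\{\xi\}\big]\subset C,
\]
hence equality. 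Since $\xi$ is extreme in $C$, it lies outside $\text{Conv}(C\setminus B_\varepsilon(\xi))$, and any exterior point adjoined to a convex body is a conical apex, hence a singular point of the resulting hull---contradicting $\xi\notin\overline{\text{Sing}\,C}$. Your approach instead exploits the $C^1$ local structure explicitly and yields a vivid geometric picture (straightness of level curves forced by the absence of nearby extreme points), at the cost of substantially more machinery: Straszewicz to upgrade ``extreme'' to ``exposed'', semicontinuity of the face map to trap faces inside $U$, and a careful case analysis on face dimension. Both arguments are valid, but the paper's global convex-hull trick is dramatically shorter.
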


\begin{proof}
Assume the contrary and let $\xi$ be an isolated point of $\text{\rm Ext}C \setminus \overline{{\text{\rm Sing$C$}}}$; then for some $\ve > 0$, the punctured neighborhood $B_\ve(\xi) \setminus \{ \xi \}$ does not intersect $\text{\rm Ext}C \cup \overline{{\text{\rm Sing$C$}}}$.

By Minkowski's Theorem,  $C = \text{Conv(Ext$C$)}$. Further, we have
$$
\text{Ext}C \subset \big( C \setminus B_\ve(\xi) \big) \cup \{ \xi \} \subset \text{Conv}\big( C \setminus B_\ve(\xi) \big) \cup \{ \xi \},
$$
hence
$$
C = \text{Conv(Ext$C$)} \subset \text{Conv} \Big[\text{Conv}\big( C \setminus B_\ve(\xi) \big) \cup \{ \xi \}\Big],
$$
and therefore, we have the equality
\beq\label{eqeq}
C = \text{Conv} \Big[\text{Conv}\big( C \setminus B_\ve(\xi) \big) \cup \{ \xi \}\Big].
\eeq
Since the point $\xi$ lies outside $\text{Conv}\big( C \setminus B_\ve(\xi) \big)$, it is a singular point of the convex body in the right hand side of \eqref{eqeq}. This contradicts the assumption that $\xi$ is not a singular point of $\pl C$.
\end{proof}

Consider three convex bodies $C,\, C_1,$ and $C_2$. Taking into account that the sets $\pl C_1$ and $\pl C_2$ are closed, we immediately obtain the following corollary of Lemma \ref{l isolated}.

\begin{corollary}\label{cor isolated}
The set $\text{\rm Ext}C \setminus (\pl C_1 \cup \pl C_2 \cup \overline{{\text{\rm Sing$C$}}})$ does not have isolated points.
\end{corollary}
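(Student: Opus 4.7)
The plan is to derive this directly from Lemma \ref{l isolated} via a local reduction: any alleged isolated point of the smaller set $\text{\rm Ext}C \setminus (\pl C_1 \cup \pl C_2 \cup \overline{\text{\rm Sing}C})$ must, after shrinking the witnessing neighborhood, also become an isolated point of the larger set $\text{\rm Ext}C \setminus \overline{\text{\rm Sing}C}$, which contradicts the lemma. The key observation is that being isolated is a purely local property, so removing a closed set that is disjoint from a given point cannot turn a non-isolated point into an isolated one.

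Concretely, I would argue by contradiction: assume $\xi$ is such an isolated point, so there exists $\ve_0 > 0$ with
$$
B_{\ve_0}(\xi) \cap \Big(\text{\rm Ext}C \setminus (\pl C_1 \cup \pl C_2 \cup \overline{\text{\rm Sing}C})\Big) = \{\xi\}.
$$
Since $C_1$ and $C_2$ are convex bodies, the set $\pl C_1 \cup \pl C_2$ is closed (indeed compact); because $\xi \notin \pl C_1 \cup \pl C_2$, one can choose $\ve \in (0,\,\ve_0]$ such that $B_\ve(\xi) \cap (\pl C_1 \cup \pl C_2) = \emptyset$. Inside this smaller ball, deleting $\pl C_1 \cup \pl C_2$ has no effect, so
$$
B_\ve(\xi) \cap \big(\text{\rm Ext}C \setminus \overline{\text{\rm Sing}C}\big) = B_\ve(\xi) \cap \Big(\text{\rm Ext}C \setminus (\pl C_1 \cup \pl C_2 \cup \overline{\text{\rm Sing}C})\Big) = \{\xi\}.
$$
Hence $\xi$ is an isolated point of $\text{\rm Ext}C \setminus \overline{\text{\rm Sing}C}$, contradicting Lemma \ref{l isolated}.

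No serious obstacle is anticipated here; the corollary is essentially a topological triviality once Lemma \ref{l isolated} is in hand. The only ingredients beyond the lemma are the closedness of $\pl C_1 \cup \pl C_2$, which is automatic since $C_1$ and $C_2$ are compact, and the (purely formal) manipulation of set intersections above. This is why the authors write that the corollary is obtained \emph{immediately}.
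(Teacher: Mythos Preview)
Your proof is correct and follows exactly the paper's approach: the authors simply note that $\pl C_1$ and $\pl C_2$ are closed and invoke Lemma~\ref{l isolated}, which is precisely the local reduction you spell out in detail.
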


The following Lemmas \ref{l kolpaki} and \ref{l O} are auxiliary; they will be used in the proof of Lemmas \ref{l cones} and \ref{l stretching}.

\begin{lemma}\label{l kolpaki}
Let $\{ O_1,\, O_2, \ldots \}$ be a finite or countable set of points outside $C$ such that for all $i \ne j$ the intersection of the open line interval $(O_i,\, O_j)$ with $C$ is not empty. Denote
$$\tilde C = \text{\rm Conv}(C \cup \{ O_1,\, O_2, \ldots \}).$$
Then

(a)
$$\tilde C = \cup_i \text{\rm Conv}(C \cup \{ O_i \}).$$

(b) If the set of points $O_i$ is finite, $\xi \in \text{\rm Ext}C \setminus \overline{\text{\rm Sing}C}$, and for all $i$ the intersection of the interval $(O_i,\, \xi)$ with $C$ is not empty, then $\xi \in \text{\rm Ext}\tilde C\setminus \overline{\text{\rm Sing}\tilde C}$.

(c)
$$
\{ O_1,\, O_2, \ldots \} \subset \text{\rm Ext}\tilde C \subset \text{\rm Ext}C \cup \{ O_1,\, O_2, \ldots \}.$$

(d) Assume, additionally, that for all $i$ and all $\xi \in \text{\rm Sing}C$ the intersection of the interval $(O_i,\, \xi)$ with $C$ is not empty. Then
$$\text{\rm Sing}C \cup \{ O_1,\, O_2, \ldots \} \subset \text{\rm Sing}\tilde C .$$
\end{lemma}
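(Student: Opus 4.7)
The plan is to prove the four parts in the order (a), (c), (b), (d). The single geometric observation that drives (b) and (d) is the following transparency principle: if $y \in \partial C$ and $(O, y) \cap C \ne \emptyset$ for some point $O$ outside $C$, then $O$ lies in every closed halfspace $H^-$ containing $C$ with $y \in H$. Indeed, any interior intersection point $q = y + t(O - y) \in C$ with $t \in (0, 1)$ satisfies $\langle q - y, n\rangle \le 0$ for the outward normal $n$ of $H$, whence $\langle O - y, n\rangle \le 0$. Consequently, under the respective hypotheses, every supporting hyperplane of $C$ at $y$ remains a supporting hyperplane of $\tilde C$ at $y$.

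For (a), the nontrivial inclusion is proved by induction on the number $k$ of distinct $O$'s appearing in a finite convex-combination representation of a point $p \in \tilde C$ (such a finite combination always exists, even in the countable case, by definition of the convex hull). If $k \ge 2$, pick $q \in (O_{i_1}, O_{i_2}) \cap C$, write $q = \mu O_{i_1} + (1-\mu) O_{i_2}$, and rebalance $\alpha_1 O_{i_1} + \alpha_2 O_{i_2}$ against $q$, eliminating whichever $O$ has the smaller ratio $\alpha_j$ to its weight in $q$; this swap replaces one of the $O$'s by a point of $C$, reducing $k$. For (c), the inclusion $\text{Ext}\,\tilde C \subset \text{Ext}\,C \cup \{O_i\}$ is the standard fact that extreme points of a convex hull lie in the spanning set, together with the observation that any extreme point of $\tilde C$ lying in $C$ is \emph{a fortiori} extreme in $C$. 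To prove $O_i \in \text{Ext}\,\tilde C$, assume $O_i = \lambda a + (1-\lambda) b$ with $a, b \in \tilde C \setminus \{O_i\}$; expand via (a) and apply the same rebalancing to reach $O_i = \tilde\lambda O_k + (1-\tilde\lambda) \tilde c$ with $\tilde c \in C$. If $k = i$, matching coefficients forces either $\tilde c = O_i$ or $a = b = O_i$, each a contradiction; if $k \ne i$, the transparency $(O_i, O_k) \cap C \ne \emptyset$ supplies $p \in C$, and collinearity places $O_i$ strictly inside the segment $(\tilde c, p) \subset C$, again contradicting $O_i \notin C$.

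For (b), extremity of $\xi$ in $\tilde C$ follows the same template: reduce a putative decomposition $\xi = \mu a + (1-\mu) b$ to $\xi = \tilde\lambda O_k + (1-\tilde\lambda) \tilde c$ with $\tilde c \in C$; if $\tilde\lambda > 0$, the transparency $(O_k, \xi) \cap C \ne \emptyset$ produces $p \in C$ placing $\xi$ strictly inside $(p, \tilde c) \subset C$, contradicting $\xi \in \text{Ext}\,C$. Hence $\tilde\lambda = 0$ and the decomposition collapses to $a = b = \xi$. To show $\xi \notin \overline{\text{Sing}\,\tilde C}$, note that transparency places $\xi$ strictly behind the shadow boundary of $C$ as seen from each $O_i$, so the closure of each cap $\text{Conv}(C \cup \{O_i\}) \setminus C$ does not contain $\xi$. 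Finiteness of the $O_i$'s then yields a single neighborhood $U$ of $\xi$ disjoint from all caps and from $\text{Sing}\,C$; by (a), $\tilde C \cap U = C \cap U$, so any supporting hyperplane of $\tilde C$ at a point of $U \cap \partial \tilde C \subset \partial C$ also supports $C$, and uniqueness of the tangent is inherited from $C$ to $\tilde C$ throughout $U$. For (d), the opening observation directly gives $\text{Sing}\,C \subset \text{Sing}\,\tilde C$: multiple supporting hyperplanes of $C$ at $y \in \text{Sing}\,C$ become multiple supporting hyperplanes of $\tilde C$. For $O_i \in \text{Sing}\,\tilde C$, there are multiple hyperplanes through $O_i$ with $C \subset H^-$ (since $C$ is three-dimensional and $O_i \notin C$), and each such $H$ supports $\tilde C$ by the $(O_i, O_j)$-version of transparency.

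The main obstacle is the extremity step of (b). The rebalancing from (a) leaves a specific residual index $k$, and eliminating $O_k$ requires the transparency hypothesis at that very $k$; fortunately, the lemma imposes $(O_i, \xi) \cap C \ne \emptyset$ for every $i$, so whichever $k$ is produced works. The regularity half of (b) is where finiteness enters essentially: an accumulating countable family of caps could in principle approach $\xi$, which is why the corresponding statement is not asserted in the countable case.
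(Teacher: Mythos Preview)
Your transparency principle is exactly the engine behind parts (b) and (d), and your treatment of (a), (c), (d) is essentially the paper's argument, just phrased more algebraically where the paper sometimes appeals to the geometric decomposition of $\partial\tilde C$.

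There is, however, a real gap in the extremity half of (b). Your reduction takes $\xi=\mu a+(1-\mu)b$ and rebalances it down to $\xi=\tilde\lambda O_k+(1-\tilde\lambda)\tilde c$; when $\tilde\lambda>0$ your transparency argument works perfectly. But the assertion ``$\tilde\lambda=0$ forces $a=b=\xi$'' is not justified and is in fact not what happens. If $a\in\mathrm{Conv}(C\cup\{O_{j_1}\})$ and $b\in\mathrm{Conv}(C\cup\{O_{j_2}\})$ with $j_1\ne j_2$ and both caps genuinely contribute, the rebalancing step can absorb \emph{both} $O_{j_1}$ and $O_{j_2}$ into a single point $q\in(O_{j_1},O_{j_2})\cap C$, leaving $\tilde\lambda=0$ with $a,b$ still distinct from $\xi$. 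At that moment you hold $\xi=\alpha c+(1-\alpha)q$ with $c,q\in C$, and you would need one more step (either $c\ne q$ and you are done, or a further use of transparency on $O_{j_1},O_{j_2}$ separately) --- but your sketch does not supply it, and the blanket claim about $a=b=\xi$ is wrong.

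The paper sidesteps this by never merging the two sides: it fixes the segment $[\xi_1,\xi_2]\ni\xi$ and shows \emph{separately} that $[\xi_1,\xi)$ and $[\xi_2,\xi)$ each meet $C$. For each endpoint $\xi_1$, one writes $\xi_1\in[O_i,x_1)$ with $x_1\in C$, takes $x\in(O_i,\xi)\cap C$, and observes that in the triangle $\xi O_i x_1$ the chord $[x,x_1]\subset C$ crosses $(\xi,\xi_1)$. Your rebalancing idea can be made to work too, but you must run it on each half-segment independently rather than on the combined expression for $\xi$.

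For the regularity half of (b), your neighborhood argument is a legitimate alternative to the paper's sequential one; the only point needing a bit more care is the claim that $\xi\notin\overline{\mathrm{Conv}(C\cup\{O_i\})\setminus C}$, which should be argued via the fact that the hypothesis $(O_i,\xi)\cap C\ne\emptyset$ together with $\xi\in\mathrm{Ext}\,C$ places $\xi$ strictly on the ``far'' sheet $\partial_+C$ away from the shadow boundary.
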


\begin{proof}
(a) 
Take a point $x \in \tilde C$. One needs to prove that for some $j$, $x \in \text{Conv}(C \cup \{ O_j \}).$ Two cases are possible: either $x \in C$, or $x$ is a convex combination
\beq\label{eq lcomb}x = \lam x_0 + \sum_1^m \lam_i O_i,
\eeq
where $x_0 \in C$, $\lam > 0$, $\lam_i > 0$ for all $i$, and $\lam + \sum_1^m \lam_i = 1$. In the former case there is nothing to prove. In the latter case assume, without loss of generality, that the linear combination in \eqref{eq lcomb} is the shortest one, that is, $m$ cannot be made smaller. Let us show that $m = 1$.

Indeed, suppose that $m \ge 2$. Since the segment $(O_1,\, O_2)$ intersects $C$, we have $\mu O_1 + (1-\mu) O_2 = \hat x \in C$ for some $0 < \mu < 1$. Assume without loss of generality that $\lam_1/\lam_2 \ge \mu/(1 - \mu)$; then
$$
\lam_1 O_1 + \lam_2 O_2 = \frac{\lam_2}{1 - \mu}\, \hat x + \lam_2 \Big( \frac{\lam_1}{\lam_2} - \frac{\mu}{1 - \mu} \Big) O_1,
$$
and the convex combination in \eqref{eq lcomb} can be shortened,
$$
x = \tilde\lam \tilde x_0 + \tilde\lam_1 O_1 + \sum_{i=3}^m \lam_i O_i,
$$
where
$$
\tilde\lam = \lam + \frac{\lam_2}{1 - \mu}, \quad \tilde x_0 = \frac{\lam(1 - \mu)}{\lam(1 - \mu) + \lam_2}\, x_0 +  \frac{\lam_2}{\lam(1 - \mu) + \lam_2}\, \hat x \in C, \quad \tilde\lam_1 = \lam_2 \Big( \frac{\lam_1}{\lam_2} - \frac{\mu}{1 - \mu} \Big).
$$
(If $m = 2$, the sum $\sum_3^m$ equals zero.) Note that $\lam x_0 + \frac{\lam_2}{1 - \mu}\, \hat x \in C$.

This contradiction shows that $m = 1$, that is, for some $j$,
$$x = \lam x_0 + \lam_j O_j \in \text{Conv}(C \cup \{ O_j \}) \quad \text{with} \ \, \lam > 0,\, \ \lam_j > 0,\, \ \lam + \lam_j = 1.
$$
Claim (a) is proved.

(b) Assume that $\xi$ is not an extreme point of $\tilde C$. This means that $\xi$ is an interior point of a line segment $[\xi_1,\, \xi_2] \subset \tilde C$. We are going to prove that each of the semiopen intervals $[\xi_1,\, \xi)$ and $[\xi_2,\, \xi)$ contains a point of $C$, and therefore, $\xi$ is not an extreme point of $C$, in contradiction with the hypothesis that $\xi \in \text{Ext}C$.

Suppose that $\xi_1 \not\in C$. Since the point $\xi_1$ is in $\tilde C$, by claim (a), for some $i$ we have $\xi_1 \in [O_i ,\, x_1)$, where $x_1 \in C$. On the other hand, by the hypothesis of the lemma, a point $x \in (O_i,\, \xi)$ lies in $C$.

If the triangle $\xi O_i x_1$ is non-degenerate then the segments $[x,\, x_1]$ and $(\xi,\, \xi_1)$ intersect at a point $\xi_1'$. Since $x$ and $x_1$ lie in $C$,\, $\xi_1'$ also belongs to $C$.

If, otherwise, the points $\xi,\, O_i,\, x_1$ are collinear then Conv$(\xi, x, x_1)$ is a segment on the line $O_i\xi$ and is contained in $C$ (since the points $\xi, x, x_1$ lie in $C$). The point $\xi_1 (\not\in C)$ lies on this line between the segment and the point $O_i$; hence $\xi_1 \in [O_i,\, \xi)$. The same inclusion holds for $x$; it follows that $x \in (\xi,\, \xi_1).$

Thus, in any case a point of the segment $[\xi_1,\, \xi)$ (either $\xi_1$, or $\xi_1'$, or $x$) belongs to $C$. The same argument holds for the segment $[\xi_2,\, \xi)$. Hence we have $\xi \not\in \text{\rm Ext}C$. This contradiction proves that $\xi \in \text{\rm Ext}\tilde C$.

It remains to prove that $\xi \not\in \overline{\text{\rm Sing}\tilde C}$. Indeed, assume the contrary; then $\xi$ is the limit of a sequence $\xi_i \in \text{\rm Sing}\tilde C$. Since the set of points $O_j$ is finite, there is a value $j$ such that infinitely many points $\xi_i$ are contained in Conv$(C \cup \{ O_j \})$. Additionally, for $i$ sufficiently large, $\xi_i$ do not coincide with $O_j$.

There is a neighborhood of $\xi$ that does not contain singular points of $\pl C$. If a regular point of $\pl C$ belongs to $\pl\tilde C$, then it is also a regular point of $\pl\tilde C$. It follows that for $i$ sufficiently large, $\xi_i$ are not contained in $\pl C$.

Thus, without loss of generality one can assume that all points $\xi_i$ lie in Conv$(C \cup \{ O_j \}) \setminus (C \cup \{ O_j \})$, and therefore, for some $x_i \in C$,\, $\xi_i \in (x_i,\, O_j)$. Taking if necessary a subsequence, we assume that $x_i$ converge to a certain $x \in C$, and therefore, $\xi \in [x,\, O_j)$.
By the hypothesis of the lemma, there is a point $x' \in C$ contained in $(\xi,\, O_j)$. We have $\xi \in [x,\, x') \subset C$. Since $\xi$ is an extreme point of $C$, we conclude that $\xi = x$.

Each plane of support to $\tilde C$ at $\xi_i$ is also a plane of support to $\tilde C$ at each point of the segment $[x_i,\, O_j]$, and in particular, at $x_i$. It is also a plane of support to $C$ at $x_i$. Since $\xi_i \in \text{\rm Sing}\tilde C$, there are more than one such plane, and therefore, $x_i \in \text{Sing}C$. It follows that $\xi = \lim_{i\to\infty} x_i \in \overline{\text{Sing}C}$. The obtained contradiction proves claim (b).

(c) The set $\pl\tilde C$ is the union of (i) a part of the boundary $\pl C$, (ii) open segments of the form $(O_i,\, \xi)$, where $\xi \in \pl C$ and the segment lies in a plane of support to $C$ through $O_i$, and (iii) the points $O_i$.

If a point $\xi \in \text{Ext}\tilde C$ belongs to $\pl C$ then it is also an extreme point of $C$. Open segments of the form $(O_i,\, \xi)$ obviously do not contain extreme points of $\tilde C$, and all points $O_i$ are extreme points of $\tilde C$. Claim (c) is proved.

(d) Obviously, each point $O_i$ is a singular point of $\pl\tilde C$.

Take a point $\xi \in \text{Sing}C$ and take a plane of support to $C$ at $\xi$. Since for all $i$,\, $(O_i,\, \xi) \cap C \neq \emptyset$, we conclude that the body $C$ and all points $O_i$ lie in the same half-space bounded by the plane. It follows that this plane is also a plane of support for the body $\tilde C = \text{\rm Conv}(C \cup \{ O_1,\, O_2, \ldots \}).$

Thus, each plane of support to $C$ at $\xi$ is also a plane of support to $\tilde C$. Since such a plane is not unique, we conclude that $\xi \in \text{Sing}\tilde C$. Claim (d) is proved.
\end{proof}

In what follows, {\it dist} means the Euclidean distance between two points.

\begin{lemma}\label{l O}
Consider a point $\xi \in \text{\rm Ext}C \setminus \overline{\text{\rm Sing}C}$ and a closed set $A \subset C$ that does not contain $\xi$. Then for any $\ve > 0$ there exists a point $O$ outside $C$ such that

(a) dist$(\xi, O) < \ve$;

(b) for any $x \in A$, the intersection of the open segment $(O,\, x)$ with the interior of $C$ is not empty.
\end{lemma}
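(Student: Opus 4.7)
The plan is to place $O$ slightly outside $C$ by moving $\xi$ primarily in a tangent direction transverse to the local face $F_\xi := T_\xi \cap C$, together with a small secondary push into the half-space that contains $C$, so that $O$ remains outside $C$ yet lies strictly below $T_\xi$.

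I would begin with a structural observation. Since $\xi \notin \overline{\text{Sing}C}$, every boundary point of $C$ in a small ball $B_\delta(\xi)$ is regular; in local coordinates with $\xi$ at the origin, $T_\xi = \{w=0\}$ and $n_\xi = (0,0,1)$, the boundary is a graph $w = f(u,v)$ with $f$ concave, $f(0)=0$, $\nabla f(0)=0$, $f \le 0$. Moreover $F_\xi$ is at most one-dimensional: a two-dimensional face would force $\xi$ either to be non-extreme (if in its relative interior) or to lie on a one-dimensional subface (where multiple normals arise, contradicting regularity). Since $\xi$ is also an extreme point of $F_\xi$, either $F_\xi = \{\xi\}$ or $F_\xi$ is a segment with $\xi$ as an endpoint; in either case one selects a unit direction $e \in T_\xi$ transverse to $F_\xi$, so that $f(\eta e) < 0$ for all small $\eta > 0$. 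I then set
\beqo
O := \xi + \eta\, e + \tfrac12\, f(\eta e)\, n_\xi.
\eeqo
In local coordinates the $w$-component of $O$ is $\tfrac12 f(\eta e)$, strictly between $f(\eta e)$ and $0$, so $O$ lies above the graph of $f$ at $\eta e$ (outside $C$) yet strictly below $T_\xi$. For small $\eta$, $|O-\xi| < \ve$, giving (a).

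For (b), the key feature is $O \notin T_\xi$. For any $x \in A$, the affine plane $P := \text{aff}(O, \xi, x)$ is distinct from $T_\xi$, so (by uniqueness of $T_\xi$ as the supporting plane at $\xi$) $P$ cannot be a supporting plane of $C$ at all---otherwise $P$ would support $C$ at $\xi$; hence $P \cap \text{int}(C) \ne \emptyset$, and this intersection equals the $2$-dimensional relative interior of $P \cap C$ in $P$. The segment $(O, x) \subset P$ begins at $O$ outside $C$ and reaches $x \in C$, entering $C$ at a first point $p'$. If the sub-segment $(p', x)$ lay entirely on $\partial C$, it would lie in a common face of $C$, hence in $T_{p'} \cap C$, which would force $(x - O) \cdot n_{p'} = 0$; but a direct computation gives, for small $\eta$, $(x - O)\cdot n_{p'} \approx -\tfrac12 f(\eta e) > 0$ when $x \in F_\xi$, and $(x - O)\cdot n_{p'} \approx (x-\xi)\cdot n_\xi < 0$ when $x \notin F_\xi$, both nonzero. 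Therefore $(p', x)$ is not contained in $\partial C$ and must meet $\text{int}(C)$. Uniformity across the compact set $A$ follows by continuity once $\eta$ is fixed small enough.

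The main obstacle is the case $x \in F_\xi$: the naive choice $O = \xi + t\, n_\xi$ fails outright, as $(O, x)$ then lies in the open half-space $\{w > 0\}$ disjoint from $C$. The remedy is the tangential construction above, in which $O$ is displaced primarily by $\eta e$ (of order $\eta$) and only secondarily by $\tfrac12 f(\eta e)\, n_\xi$ (of smaller order), using the strict negativity of $f$ transverse to $F_\xi$ to keep $O$ outside $C$ while positioning it below $T_\xi$.
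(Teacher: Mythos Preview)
Your construction of $O$ is sound and genuinely different from the paper's: you push $\xi$ tangentially and then halfway down toward the graph, whereas the paper first separates $\xi$ from $A\cup\overline{\text{Sing}C}$ by a plane $\Pi$, builds the wedge $K$ cut out by the tangent planes of $C$ along $\partial C\cap\Pi$, and picks $O$ in the interior of $K\setminus C$. The advantage of the paper's route is that (b) becomes a one-line consequence: the segment $(O,x)$ crosses $\Pi$ at an interior point of the planar slice $C\cap\Pi$, hence at an interior point of $C$.

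Your verification of (b), by contrast, has a real gap. You assume that if $[p',x]\subset\partial C$ then $p'$ is regular, so $n_{p'}$ is defined and $(x-O)\cdot n_{p'}=0$; then you argue the latter quantity is nonzero via $n_{p'}\approx n_\xi$. But you never localize $p'$: it is the \emph{first} contact of $[O,x]$ with $C$, and although $O$ is near $\xi$, for $x\in\partial C$ the entry point can be anywhere on the segment (in particular $p'=x$ is possible, a case you do not treat). Without knowing $p'$ lies in the regular neighbourhood of $\xi$, neither the existence of $n_{p'}$ nor the approximation $n_{p'}\approx n_\xi$ is justified. Even granting $n_{p'}\approx n_\xi$, your two-case computation is not uniform: for $x\notin F_\xi$ you drop the term $-\tfrac12 f(\eta e)$, but when $x$ is close to $F_\xi$ the retained term $(x-\xi)\cdot n_\xi$ may be comparable to or smaller than $|f(\eta e)|$, so the sign (and nonvanishing) of $(x-O)\cdot n_{p'}$ is not established. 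The closing appeal to ``uniformity by continuity once $\eta$ is fixed'' does not address any of this.

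A smaller point: your claim that $F_\xi$ is at most one-dimensional is not quite argued correctly (an extreme regular point can sit on the relative boundary of a two-dimensional face), but this is harmless---all you actually need is a direction $e\in T_\xi$ with $\xi+\eta e\notin F_\xi$ for small $\eta>0$, and that follows simply from $\xi\in\text{Ext}\,F_\xi$.
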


\begin{proof}
The convex hull Conv$(C \setminus B_\ve(\xi))$ does not contain $\xi$. Making if necessary $\ve$ sufficiently small, we can assume that the set $A \cup \overline{\text{\rm Sing}C}$ is contained in Conv$(C \setminus B_\ve(\xi))$.

Take a plane $\Pi$ that separates the point $\xi$ and the set Conv$(C \setminus B_\ve(\xi))$.  Let this plane be given by $\langle x,\, n \rangle = c$, with $\xi$ being contained in the half-space $\langle x,\, n \rangle > c$ and Conv$(C \setminus B_\ve(\xi))$, in the complementary half-space $\langle x,\, n \rangle < c$. Here and in what follows, $\langle \cdot\,, \cdot \rangle$ means the scalar product. See Fig.~\ref{fig13}.

      \begin{figure}[h]
\begin{picture}(0,175)
\scalebox{1}{
\rput(6.5,3.25){
\psdots(-0.24,-1.692) (-0.4,-1.02) 
\psline(-0.4,-1.02)(0,-2.7)
\rput(0.03,-1.6){\scalebox{0.9}{$O$}}
\rput(-0.4,-0.7){\scalebox{1}{$\xi$}}
\rput(2.7,-0.3){\scalebox{1}{$\Pi$}}
\pscurve[linecolor=black,linewidth=0.8pt](-2.5,1.5)(-1.5,2.5)(1.5,2.5)(2.5,1.5)
\pscurve[linecolor=blue,linewidth=0.8pt](-2.5,1.5)(-1,-0.8)(1,-0.8)(2.5,1.5)
\psline[linecolor=black,linewidth=0.8pt](-2.9,0)(3,0)
\psline[linecolor=brown,linestyle=dashed](-1.77,0)(0,-2.7)(1.77,0)
\psline[linecolor=black,linewidth=0.8pt,linestyle=dashed](-2.9,-2.7)(3,-2.7)
\rput(2.7,-2.3){\scalebox{1}{$\Pi_1$}}
\rput(0,-3){\scalebox{1}{$\xi_1$}}
\rput(0,0.8){\scalebox{1.5}{$C$}}
\rput(0,2.4){\scalebox{1}{$A$}}
}
}
\end{picture}
\caption{The plane $\Pi$ separates $\xi$ and $A$. The set $\KKK$ is bounded above by $\Pi$ and below by the dashed line. The part of $C$ below $\Pi$ is contained in the $\ve$-neighborhood of $\xi$.}
\label{fig13}
\end{figure}
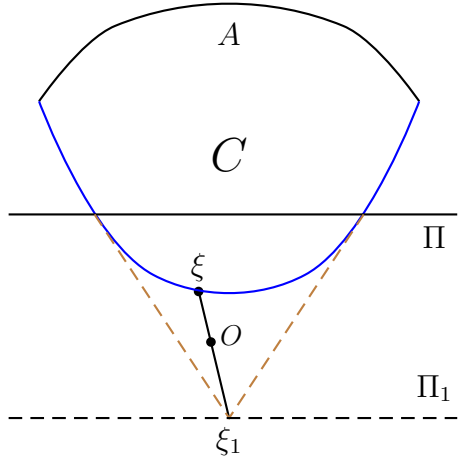

Draw the tangent planes to $C$ through all points of $\pl C \cap \Pi$ (which are regular); the intersection of the half-space $\langle x,\, n \rangle \ge c$ and all closed half-spaces bounded by these planes and containing $C$ is a convex set containing $\xi$. Let it be denoted by $\KKK$.

Let us show that $\KKK \setminus C$ is not empty. If $\KKK$ is unbounded, this is obvious. If $\KKK$ is bounded, draw the plane of support to $\KKK$ with the outward normal $n$ and denote it by $\Pi_1$. Thus, $\KKK$ is contained between the planes $\Pi$ and $\Pi_1$; see Fig.~\ref{fig13}.

Take a point $\xi_1$ in the intersection $\Pi_1 \cap \pl\KKK$. There is at least one more plane of support to $\KKK$ through a point of $\pl C \cap \Pi$ that contains $\xi_1$. It follows that $\xi_1$ is a singular point of $\pl\KKK$. Hence it does not belong to $C$, since otherwise it is also a singular point of $\pl C$. Thus, $\xi_1 \in \KKK \setminus C$.

Take a point $\xi'$ in the interior of $\KKK \setminus C$.
Draw the line segment $[\xi, \xi']$ and find a point $O$ on it that lies outside $C$ and belongs to $B_\ve(\xi)$.
Thus, condition (a) is satisfied, due to the choice of $O$.

Take a point $x \in A$. The point $x$ lies in the intersection of closed half-spaces bounded by the tangent planes to $C$ through all points of $\pl C \cap \Pi$ and containing $C$, and $O$ lies in the intersection of the corresponding open half-spaces. It follows that the point of intersection of the interval $(O,\, x)$ with the plane $\Pi$ lies in the interior of the planar set $C \cap \Pi$, and therefore, belongs to the interior of $C$. Thus, condition (b) is also satisfied.
\end{proof}

Assume that we are given three convex bodies $C_1 \subset C \subset C_2$.

\begin{lemma}\label{l cones}
Suppose that the set $E := \text{\rm Ext}C \setminus (\pl C_1 \cup \pl C_2 \cup \overline{{\text{\rm Sing$C$}}})$ is not empty and choose a point $\hat\xi \in E$. Then there exists an infinite sequence of points $\{ O_j,\, j \in \JJJ \}$ in $C_2 \setminus C$ such that

(a) for all $i \ne j$ and all $\xi \in C_1 \cup \overline{{\text{\rm Sing$C$}}} \cup \{ \hat\xi \}$, the intersections of the open segments $(O_i,\, O_j)$ and $(O_i,\, \xi)$ with the interior of $C$ are not empty;

(b) for $\widehat C = \text{\rm Conv}(C \cup \{ O_j,\, j \in \JJJ \})$ one has
$$
\text{\rm Ext}\widehat C \subset \pl C_1 \cup \pl C_2 \cup \overline{{\text{\rm Sing$\widehat C$}}}.
$$
\end{lemma}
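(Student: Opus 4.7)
The plan is to construct $\{O_j\}$ by iterated application of Lemma~\ref{l O} along a countable dense sequence $\{\xi_j\} \subset E$, placing each $O_j$ just outside $C$ close to the corresponding $\xi_j$.

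By Corollary~\ref{cor isolated}, $E$ has no isolated points, so it contains a countable sequence $\{\xi_j\}_{j=1}^\infty$ of pairwise distinct points, each different from $\hat\xi$, dense in $E$. At step $j$, set
\begin{equation*}
A_j = C_1 \cup \overline{\text{\rm Sing}\,C} \cup \{\hat\xi,\xi_1,\ldots,\xi_{j-1}\} \subset C;
\end{equation*}
this is closed and $\xi_j \notin A_j$ (since $\xi_j \in \text{\rm Ext}\,C \cap \pl C$ forces $\xi_j \notin \text{int}\,C_1$, and $\xi_j \notin \pl C_1$ by $\xi_j \in E$, so $\xi_j \notin C_1$; the other exclusions are built into the choice of the sequence). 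Apply Lemma~\ref{l O} with $\xi = \xi_j$, $A = A_j$, and $\ve_j > 0$ small enough that $B_{\ve_j}(\xi_j) \subset \text{int}\,C_2$ (possible because $\xi_j \in \pl C \cap C_2$ and $\xi_j \notin \pl C_2$) and $\ve_j < 1/j$. This yields $O_j \in (C_2 \setminus C) \cap B_{\ve_j}(\xi_j)$ with $(O_j, x) \cap \text{int}\,C \ne \emptyset$ for every $x \in A_j$.

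For claim (a), the intersections $(O_i, \xi) \cap \text{int}\,C$ with $\xi \in C_1 \cup \overline{\text{\rm Sing}\,C} \cup \{\hat\xi\}$ are delivered directly by Lemma~\ref{l O} at step $i$. For the pairwise condition $(O_i, O_j) \cap \text{int}\,C \ne \emptyset$ with $i<j$, since $\xi_i \in A_j$ Lemma~\ref{l O} produces a point $p_{ij} = (1-t_{ij}) O_j + t_{ij} \xi_i \in \text{int}\,C$ together with a radius $r_{ij} > 0$ such that $B(p_{ij}, r_{ij}) \subset \text{int}\,C$; replacing $\xi_i$ by the nearby $O_i$ shifts this point by $t_{ij} |O_i - \xi_i| \le \ve_i$, so if $\ve_i < r_{ij}$ the perturbed point lies in $B(p_{ij}, r_{ij}) \cap (O_i, O_j) \subset \text{int}\,C$. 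For claim (b), Lemma~\ref{l kolpaki}(c,d) gives $\text{\rm Ext}\widehat C \subset \text{\rm Ext}\,C \cup \{O_j\}$ with every $O_j$ lying in $\text{\rm Sing}\widehat C$ and $\overline{\text{\rm Sing}\,C} \subset \overline{\text{\rm Sing}\widehat C}$; any $\xi \in \text{\rm Ext}\widehat C \setminus (\pl C_1 \cup \pl C_2 \cup \overline{\text{\rm Sing}\,C})$ lies in $E$, and combining density of $\{\xi_j\}$ in $E$ with $|O_j - \xi_j| < 1/j$ produces a subsequence $O_{j_k} \to \xi$, so $\xi \in \overline{\{O_j\}} \subset \overline{\text{\rm Sing}\widehat C}$.

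The hard part is guaranteeing $\ve_i < r_{ij}$ for every $i<j$. The radius $r_{ij}$ depends on the geometry of the segment $(O_j, \xi_i)$ inside $C$ and can degenerate to $0$ as $\xi_i$ approaches $\xi_j$ (in fact $r_{ij} = O(|\xi_i - \xi_j|^2)$ near smoothly curved extreme points of $\pl C$), while $\ve_i$ is already frozen at step $i$ before $\xi_j$ is selected. My plan is to prune the dense sequence adaptively: at step $i$, after fixing $\ve_i$, also record a spacing radius $\rho_i > 0$ chosen (compatibly with $\ve_i$ and the local geometry at $\xi_i$) so that $r_{ij} > \ve_i$ is automatic whenever $|\xi_j - \xi_i| \ge \rho_i$; subsequent steps are then required to satisfy $|\xi_j - \xi_i| \ge \rho_i$ for every $i < j$. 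With the $\rho_i$ decaying sufficiently fast, the absence of isolated points in $E$ (Corollary~\ref{cor isolated}) lets the pruned sequence still meet every element of a fixed countable base of open sets of $E$, keeping it dense and thus completing the construction.
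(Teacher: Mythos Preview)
Your overall strategy --- place points $O_j$ inductively near a dense sequence in $E$ via Lemma~\ref{l O} --- matches the paper's. The gap you yourself flag is real, and the pruning fix does not close it. The radius $r_{ij}$ measures how deeply the segment $(O_j,\xi_i)$ penetrates $\text{int}\,C$; this depends on the position of $O_j$ and on the shape of $C$ between $\xi_i$ and $\xi_j$, not merely on $|\xi_i-\xi_j|$. There is no reason a spacing bound $|\xi_j-\xi_i|\ge\rho_i$, fixed at step $i$ before $O_j$ exists, should force $r_{ij}>\ve_i$ for all later $j$: even when $\xi_j$ is far from $\xi_i$, the chord $(O_j,\xi_i)$ can graze $\pl C$ if $C$ is thin in that direction. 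Your density claim is also in tension with the pruning: the constraint $|\xi_j-\xi_i|\ge\rho_i$ must hold for \emph{every} earlier $i$, so making $\rho_i$ small for large $i$ does nothing to shrink the exclusion zones imposed at early steps.

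The paper sidesteps the circularity by a different bookkeeping device: at step $m+1$ it applies Lemma~\ref{l O} to the \emph{updated} body $\widehat C^m=\text{Conv}(C\cup\{O_i:i\le m\})$, with the closed set $A=C_1\cup\overline{\text{Sing}\widehat C^m}\cup\{\hat\xi\}$. By Lemma~\ref{l kolpaki}(d) every previously placed $O_i$ is a singular point of $\widehat C^m$, hence already sits in $A$, and Lemma~\ref{l O} then yields $(O_{m+1},O_i)\cap\text{int}\,C\ne\emptyset$ directly at step $m+1$ --- no perturbation, no backward constraint on $\ve_i$. This in turn forces the paper to track the shrinking sets $E_m=\text{Ext}\widehat C^m\setminus(\pl C_1\cup\pl C_2\cup\overline{\text{Sing}\widehat C^m})$ and to use a sequence of successively finer finite covers of $E$ rather than one dense sequence fixed in advance: a cover set $D_{m+1}$ is used only when it still meets $E_m$, and the argument for (b) runs through $E_\infty\subset\bigcap_m E_m$. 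Your perturbation idea is natural, but it cannot be made rigorous without switching to this ``include the previous $O_i$'s in $A$ via the enlarged body'' viewpoint.
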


\begin{proof}
Take a sequence of positive values $\ve_n$ converging to 0.
Choose a finite sequence of open sets (for example, open balls) $D_1, \ldots, D_{j_1}$ in $\RRR^3$, each set of diameter less than $\ve_1$, such that the union of the sets contains $E$. Next we define a finite sequence of open sets $D_{j_1 + 1}, \ldots, D_{j_2}$, each set of diameter less than $\ve_2$, such that their union contains $E$. Continuing this process, we obtain infinite sequences of integers $0 = j_0 < j_1 < j_2 < \ldots$ and sets $D_1,\, D_2, \ldots$ such that for each $n \ge 1$, the diameter of each of the domains $D_{j_{n-1} + 1}, \ldots, D_{j_n}$ is less than $\ve_n$ and
$$E \subset \bigcup_{i=j_{n-1} + 1}^{j_n} D_i.
$$

We are going to define inductively an infinite set of natural numbers $\JJJ \subset \NNN$ and a sequence of points $O_j$, $j \in \JJJ$, in $C_2 \setminus C$ satisfying condition (a). Denote $\{ 1, \ldots, m \}' := \JJJ \cap \{ 1, \ldots, m \}$. Define the sets
$$
\widehat C^m := \text{Conv}\big( C \cup \big\{ O_j,\, j \in \{ 1, \ldots, m \}' \big\} \big) \ \ \text{and} \ \
E_m := \text{\rm Ext}\widehat C^m \setminus (\pl C_1 \cup \pl C_2 \cup \overline{{\text{\rm Sing$\widehat C^m$}}})
$$
(in particular, $\widehat C^0 = C$ and $E_0 = E$); we additionally require that the sets $E_m$ contain $\hat\xi$ and are nested, that is, for $m_1 \le m_2$ we have $E_{m_2} \subset E_{m_1} \subset E$.

For $m = 0$ the set of points $O_i$ is empty, and therefore, condition (a) is trivially satisfied, and $\hat\xi \in E = E_0$. Now suppose that for a certain integer $m \ge 0$, the set $\{ 1, \ldots, m \}'$ is defined, the points $O_j$, $j \in \{ 1, \ldots, m \}'$ in $C_2 \setminus C$ satisfying condition (a) are chosen, and the inclusions $\hat\xi \in E_m \subset \ldots \subset E_1 \subset E_0 = E$ take place. If $E_{m} \cap D_{m+1} = \emptyset$, let $m+1 \not\in \JJJ$. In this case the statement of induction for $m+1$ is trivially satisfied.

If, otherwise, the set $E_m \cap D_{m+1}$ is not empty, let $m+1 \in \JJJ$ and take a point $\xi_{m+1}$ from this set distinct from $\hat\xi$. (Such a point exists, since by Corollary \ref{cor isolated}, the set $E_m$ does not have isolated points, and therefore, $E_m \cap D_{m+1}$ is not a singleton.)
Let $n$ be such that $m + 1 \in \{ j_{n-1} + 1, \ldots, j_n \}$. Using Lemma \ref{l O}, take a point $O_{m+1}$ in $C_2 \setminus \widehat C^m$ such that (a) dist$(\xi_{m+1}, O_{m+1}) < \ve_n$; (b) for all $\xi \in C_1 \cup \overline{{\text{\rm Sing$\widehat C^m$}}} \cup \{ \hat\xi \}$, the intersections of the open segment $(O_{m+1},\, \xi)$ with the interior of $C$ is not empty.

By the hypothesis of induction, for $i \in \{ 1, \ldots, m \}'$ and $\xi \in \text{Sing}C$, the intersection of the interval $(O_i,\, \xi)$ with the interior of $C$ is not empty. Hence by claim (d) of Lemma \ref{l kolpaki},
$$
\text{Sing}C \cup \big\{ O_i,\, i \in \{ 1, \ldots, m \}' \big\} \subset \text{Sing}\widehat C^m.
$$
It follows that for any $\xi \in C_1 \cup \overline{{\text{\rm Sing$C$}}} \cup \{ \hat\xi \}$, the intersection of the open segment $(O_{m+1},\, \xi)$ with the interior of $C$ is not empty, and for any $i \in \{ 1, \ldots, m \}'$, the intersection of $(O_{m+1},\, O_i)$ with the interior of $C$ is not empty. Thus, condition (a) is satisfied for the extended sequence of points $O_i,\, i \in \{ 1, \ldots, m+1 \}' = \{ 1, \ldots, m \}' \cup \{ m+1 \}$.

By claim (c) of Lemma \ref{l kolpaki}, $\text{Ext}\widehat C^{m+1} \subset \text{Ext}\widehat C^{m} \cup \{ O_{m+1} \}$, and by claim (d) of the same lemma, $\text{Sing}\widehat C^{m+1} \supset \text{Sing}\widehat C^{m} \cup \{ O_{m+1} \}$. It follows that
$$
E_{m+1} = \text{Ext}\widehat C^{m+1} \setminus (\pl C_1 \cup \pl C_2 \cup \overline{\text{Sing}\widehat C^{m+1}}) \subset
\text{Ext}\widehat C^{m} \setminus (\pl C_1 \cup \pl C_2 \cup \overline{\text{Sing}\widehat C^{m}}) = E_m.
$$
Further, since $\hat\xi \in E_m \subset \text{Ext}\widehat C^{m} \setminus \overline{\text{Sing}\widehat C^{m}}$ and the intersection of the interval $(O_{m+1},\, \hat\xi)$ with $C$ is non-empty, making use of claim (b) of Lemma \ref{l kolpaki} we conclude that $\hat\xi \in \text{Ext}\widehat C^{m+1} \setminus \overline{\text{Sing}\widehat C^{m+1}}$. It follows that $\hat\xi \in E_{m+1}$. The statement of induction is completely proved for $m+1$.

     In each subsequence $\{ j_{n-1} + 1, \ldots, j_n \}$ there is a number $m$ such that $D_m$ contains $\hat\xi$, and therefore, $E \cap D_m$ is not empty. It follows that $m \in \JJJ$; hence the parameter set $\JJJ$ is infinite.


We have proved that the sequence of points $O_j,\, j \in \JJJ$ satisfies claim (a) of Lemma \ref{l cones}.

Consider the set $\widehat C = \text{\rm Conv}(C \cup \{ O_j,\, j \in \JJJ \})$. By claims (c) and (d) of Lemma \ref{l kolpaki}, for any $m$ we have
\beqo\label{eq inc}
\text{Ext}\widehat C \subset \text{Ext}\widehat C^m \cup \{ O_j,\, j (\in \JJJ) \ge m+1 \} \quad \text{and} \quad
\text{Sing}\widehat C \supset \text{Sing}\widehat C^m \cup \{ O_j,\, j (\in \JJJ) \ge m+1 \},
\eeqo
hence
$$
E_{\infty} := \text{Ext}\widehat C \setminus (\pl C_1 \cup \pl C_2 \cup \overline{\text{Sing}\widehat C}) \subset
\text{Ext}\widehat C^m \setminus (\pl C_1 \cup \pl C_2 \cup \overline{\text{Sing}\widehat C^m}) = E_m \subset E.
$$
Let us show that $E_\infty$ is empty.

Assume the contrary and take $\xi \in E_\infty$. For any natural $n$ there is $m_n \in \{ j_{n-1} + 1, \ldots, j_n \}$ such that $D_{m_n}$ contains $\xi$. It follows that the set $E_{m_n-1} \cap D_{m_n} \supset E_\infty \cap D_{m_n} \ni \xi$ is non-empty, and therefore, $m_n \in \JJJ$. For the points $\xi_{m_n} \in D_{m_n}$ and $O_{m_n}$ chosen above in the proof we have
$$
\text{dist}(O_{m_n}, \xi) \le \text{dist}(O_{m_n}, \xi_{m_n}) + \text{dist}(\xi_{m_n}, \xi) < 2\ve_n.
$$
It follows that the sequence $O_{m_n}$ converges to $\xi$ as $n \to \infty$. Since by claim (d) of Lemma \ref{l kolpaki}, $\{ O_j,\, j \in J \} \subset \text{Sing}\widehat C$, we have $\xi \in \overline{\text{Sing}\widehat C}$, and therefore, $\xi \not\in E_\infty$. The obtained contradiction proves claim (b) of Lemma \ref{l cones}.
\end{proof}

The method we use in the following lemma can be called  {\it stretching the nose}.

Let $O$ be a point outside $C$. For $0 \le s \le 1$ define the set
\beq\label{eq stretching}
C(s) = \bigcup\limits_{\sqrt{1-s} \le \lam \le 1} \big( \lam C + (1 - \lam) O \big).
\eeq
In particular, $C(0) = C$ and $C(1) = \text{Conv}(C \cup \{ O \})$.

It is easy to see that $C(s)$ is a convex body.

The sets $C(0)$, $C(1)$, and $C(s)$ with $0 < s < 1$ are depicted in Figs.~\ref{fig2}(a), \ref{fig2}(b), and \ref{fig31}(a), respectively.

\begin{lemma}\label{l stretching}
Let $C$ be a solution to problem \eqref{problem} and let a point $O \in C_2 \setminus C$ be such that for any $\xi \in C_1 \cup \overline{{\text{\rm Sing$C$}}}$, the intersection of the open segments $(O,\, \xi)$ with the interior of $C$ is not empty. Then

(a) all convex bodies $C(s),\, 0 \le s \le 1$ given by \eqref{eq stretching} are also solutions to problem \eqref{problem};

(b) the measures $\nu_{\pl C(s)}$,\, $s \in [0,\, 1]$ form a linear segment: $\nu_{\pl C(s)} = \nu_{\pl C} + s \nu_0$, where $\nu_0$ is a signed measure on $S^2$.
\end{lemma}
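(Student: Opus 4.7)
The plan is to tackle part (b) first, since its conclusion---the affine dependence of $\nu_{\pl C(s)}$ on $s$---is crucial for part (a).

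For part (b), I will compute $\nu_{\pl C(s)}$ via a geometric decomposition of $\pl C(s)$. The support function of $\lambda C + (1-\lambda) O$ in direction $n \in S^2$ is $\lambda h_C(n) + (1-\lambda)\langle O, n\rangle$, linear in $\lambda$, so $h_{C(s)}(n) = \max_{\sqrt{1-s}\le \lambda\le 1}[\lambda h_C(n) + (1-\lambda)\langle O, n\rangle]$ is attained at $\lambda = 1$ when $h_C(n) > \langle O, n\rangle$ and at $\lambda = \sqrt{1-s}$ when $h_C(n) < \langle O, n\rangle$. This splits $\pl C(s)$ into: a ``back'' part (with normals in $S^+ := \{n : h_C(n) > \langle O, n\rangle\}$) that coincides with the corresponding portion of $\pl C$; a ``front'' part (with normals in $S^- := \{n : h_C(n) < \langle O, n\rangle\}$) equal to the $\sqrt{1-s}$-scaled copy about $O$ of the complementary portion of $\pl C$; and a lateral ruled surface swept by segments from each point $\xi$ on the horizon curve $\Gamma := \{\xi \in \pl C : \langle n_\xi, \xi - O\rangle = 0\}$ to its scaled image $\sqrt{1-s}\,\xi + (1-\sqrt{1-s})O$. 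The hypothesis on $O$ ensures $\Gamma$ lies in the regular part of $\pl C$, so each ruling has a well-defined constant outward unit normal. A direct Jacobian computation on $(\lambda, t) \mapsto \lambda \gamma(t) + (1-\lambda) O$ gives lateral surface area $\tfrac{s}{2} \int_\Gamma |(\gamma(t) - O) \times \gamma'(t)|\,dt$, linear in $s$. Summing the three contributions,
\[
\nu_{\pl C(s)} = \nu_{\pl C}|_{S^+} + (1 - s)\,\nu_{\pl C}|_{S^-} + s\mu = \nu_{\pl C} + s\nu_0,
\]
with $\nu_0 := \mu - \nu_{\pl C}|_{S^-}$ and $\mu$ a measure on $S^2$ supported on the Gauss image of $\Gamma$, proving (b).

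For part (a), feasibility $C_1 \subset C(s) \subset C_2$ is immediate: $C(s) \supset C = C(0) \supset C_1$, and $C(s) \subset \text{Conv}(C \cup \{O\}) \subset C_2$ by convexity of $C_2$ together with $O \in C_2$. By (b) and linearity of $F(\pl K) = \int f\,d\nu_{\pl K}$ in $\nu_{\pl K}$, the value $F(\pl C(s)) = F(\pl C) + sF_0$, with $F_0 := \int f\, d\nu_0$, is affine in $s$. Minimality of $C$ then yields $F_0 \ge 0$.

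The main obstacle is the reverse inequality $F_0 \le 0$, which together with $F_0 \ge 0$ forces $F_0 = 0$ and hence $F(\pl C(s)) \equiv F(\pl C)$ for all $s \in [0,1]$. The plan is to leverage the hypothesis---that $\overline{\text{Sing}\,C}$ and $C_1$ are both strictly hidden from $O$ by $\text{int}\,C$---to construct a companion family of feasible convex bodies whose surface area measures traverse the opposite direction $-\nu_0$ along the same affine line. Because $\nu_0$ has a singular component $\mu$ concentrated on the Gauss image of $\Gamma$, where $\nu_{\pl C}$ vanishes, this cannot be realized by a literal extension of the formula to $s < 0$; instead it must be constructed geometrically, for instance by shaving a thin neighborhood of $\Gamma$ off $\pl C$ and compensating on the regular front of $\pl C$. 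Minimality applied to this companion family would then yield $-F_0 \ge 0$. Verifying convexity and feasibility of the companion bodies---that they remain inside $C_2$ and contain $C_1$---is the crux, and is precisely where the hypothesis on $O$ plays its decisive role by confining all modifications to the regular interior of the feasibility region.
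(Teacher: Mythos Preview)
Your treatment of part (b) is essentially the paper's argument in different notation: your back/front/lateral decomposition is exactly the paper's $\partial_+C$, $\sqrt{1-s}\,\partial_-C$, $V\setminus\sqrt{1-s}\,V$, and your $\nu_0=\mu-\nu_{\partial C}|_{S^-}$ is the paper's $\nu_V-\nu_{\partial_-C}$.

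For part (a), you correctly isolate the crux as the inequality $F_0\le 0$, but your plan for obtaining it contains a genuine misconception. You dismiss the ``literal extension of the formula to $s<0$'' on the grounds that $\nu_{\partial C}+s\nu_0$ would fail to be a nonnegative measure. That objection is valid \emph{for the measure formula}, but it is the geometric construction, not the measure formula, that the paper extends. With the origin at $O$, one has $C(s)=C_-\cap\sqrt{1-s}\,C_+$, and this defines a perfectly good convex body for small $s<0$ as well. The hypothesis on $O$ is used precisely here: since every open segment $(O,\xi)$ with $\xi\in C_1\cup\overline{\text{Sing}C}$ meets $\text{int}\,C$, the surface $\partial_-C$ (your ``front'') is disjoint from $C_1\cup\overline{\text{Sing}C}$, so its slight dilation $\sqrt{1-s}\,\partial_-C$ for small $s<0$ still avoids $C_1$ and stays inside $C_2$, giving feasibility.

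The point is that one does \emph{not} need $\nu_{\partial C(s)}$ to lie on the affine line for $s<0$; one only needs $s\mapsto F(\partial C(s))$ to be differentiable at $s=0$. The paper writes, for $s<0$,
\[
F(\partial C(s))=F(\partial C)+sF_0-R(s),
\]
and proves $R(s)=o(s)$ by projecting the three thin surfaces $\partial_+C\setminus\sqrt{1-s}\,C_+$, $\sqrt{1-s}\,\partial_-C\setminus C_-$, and $\sqrt{1-s}\,V\setminus V$ onto a plane through $O$ and comparing their normals, which agree along $\Gamma$ and differ by $o(1)$ nearby (regularity of $\partial C$ near $\Gamma$ is exactly what the hypothesis guarantees). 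Then $s=0$ is an interior minimum of a differentiable function, forcing $F_0=0$. Your proposed ``shaving and compensating'' construction is not needed, and in the form you state it (surface area measures moving exactly along $-\nu_0$) is in fact impossible for the reason you yourself give.
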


This lemma is the main point in the proof of Theorem \ref{T main}. In turn, the main point in the proof of this lemma is extension of the family of admissible convex bodies $C(s)$ to negative values of $s$ and the statement that the composite function $F(\pl C(s))$ is linear for $s \in [0,\, 1]$ and differentiable at $s = 0$.

\begin{proof}
The sets $C(s)$ can be defined in another way.
Draw all the rays with vertex at $O$ that intersect $C$. The union of these rays is a closed convex cone. Denote by $A$ and $A'$ the initial (closer to $O$) and the final points of intersection of a generic ray with $C$. If the ray is tangent  then its intersection with $C$ is the line segment $[A, A']$ (which may degenerate to a point if $A = A'$). Otherwise, the intersection is the 2-point set $\{ A,\, A' \}$.

Denote by $C_-$ the union of the segments $OA'$ of all rays, and by $\pl_+ C$ the union of the corresponding points $A'$.
Denote by $V$ the surface composed of the segments $[O, A']$ contained in the tangent rays. The boundary of $C_-$ is the union $\pl_+ C \cup V$.

For each ray $OA$, denote by $AA'$ the ray contained in $OA$ with the vertex at $A$.
Denote by $C_+$ the union of the rays $AA'$, and by $\pl_- C$ the union of the points $A$ corresponding to all rays and the segments $[A, A']$ contained in the tangent rays. The boundary of $C_+$ is the union of $\pl_- C$ and the rays with the vertices at the points $A'$ contained in the tangent rays $OA'$.

       \begin{figure}[h]
\begin{picture}(0,120)
\rput(2,2.5){
\scalebox{0.6}{
\psline[linewidth=1.3pt]
(-3.4,0.6)(0,-3)(2.5,0)(3,0.6)
\rput(0.03,-3.4){\scalebox{1.5}{$O$}}
\rput(-3.45,-0.9){\scalebox{1.5}{$A_1 =A_1'$}}
\rput(2.35,-0.78){\scalebox{1.5}{$A_2$}}
\rput(3,0){\scalebox{1.5}{$A_2'$}}
\rput(0,1.9){\scalebox{1.5}{$\pl_+C$}}
\rput(-0.1,-1.95){\scalebox{1.4}{$\pl_-C$}}
\rput(1.3,-2.1){\scalebox{1.4}{$V$}}
\rput(-3,-3){\scalebox{1.3}{(a)}}
\psecurve[linecolor=blue,linewidth=1.4pt,fillstyle=solid,fillcolor=lightgray](1.5,-0.25)(2.5,0)(2.5,1)(0,1.5)(-2.5,0)(0.5,-1.5)(2,-0.6)(2.33,0)
\psdots[dotsize=4pt](2,-0.6) (2.5,0)(-2.24,-0.63)
\rput(0.1,0){\scalebox{3}{$C$}}
}}
\rput(6.5,2.5){
\scalebox{0.6}{
\pspolygon[linewidth=0pt,fillstyle=solid,fillcolor=lightgray](2,-0.6) (0,-3)(-2.24,-0.63)
\psline[linewidth=1.3pt]
(-3.4,0.6)(0,-3)(2.5,0)(3,0.6)
\rput(0.03,-3.4){\scalebox{1.5}{$O$}}
\rput(-0.1,1.9){\scalebox{1.5}{$\pl_+C$}}
\psecurve[linecolor=blue,linewidth=1.4pt,fillstyle=solid,fillcolor=lightgray,linewidth=1pt](1.5,-0.25)(2.5,0)(2.5,1)(0,1.5)(-2.5,0)(0.5,-1.5)(2,-0.6)(2.33,0)
\psdots[dotsize=4pt](0.99,1.56) (0.33,-1.5)
\rput(-0.4,-0.1){\scalebox{3}{$C_-$}}
\psline(0,-3)(1.2,2.5)
\rput(0.67,-1.05){\scalebox{1.5}{$A$}}
\rput(1.42,1.92){\scalebox{1.5}{$A'$}}
\rput(-3,-3){\scalebox{1.5}{(b)}}
}}
\rput(11.5,2.5){
\scalebox{0.6}{
\pspolygon[linewidth=0pt,linecolor=white,fillstyle=solid,fillcolor=lightgray]
(1,2.2)(0,2.5)(-1.25,2)(-2.75,2.25)(-4.0375,1.275)(-2.24,-0.63)(2.5,0)(3.625,1.35)(3,2.25)(2.25,2.5)
\psline[linewidth=1.3pt](-4.25,1.5)
(-3.4,0.6)(0,-3)(2.5,0)(3,0.6)(3.75,1.5)
\rput(0.03,-3.4){\scalebox{1.5}{$O$}}
\rput(-2.6,-0.8){\scalebox{1.5}{$A_1$}}
\rput(2.35,-0.78){\scalebox{1.5}{$A_2$}}
\rput(3,0){\scalebox{1.5}{$A_2'$}}
\rput(-0.1,-1.95){\scalebox{1.4}{$\pl_-C$}}
\psecurve[linecolor=blue,linewidth=1.4pt,fillstyle=solid,fillcolor=lightgray,linewidth=1pt](1.5,-0.25)(2.5,0)(2.5,1)(0,1.5)(-2.5,0)(0.5,-1.5)(2,-0.6)(2.33,0)
\psdots[dotsize=4pt](2,-0.6) (2.5,0)(-2.24,-0.63)
\rput(0.3,0.6){\scalebox{3}{$C_+$}}
\rput(-3,-3){\scalebox{1.5}{(c)}}
}}
\end{picture}
\caption{The sets $C$, $C_-$, and $C_+$ are shown in figures (a), (b), and (c), respectively.
The upper curve $A_1 A_2' $ is $\pl_+ C$, and the lower curve $A_1 A_2 A_2'$ is $\pl_- C$.}
\label{fig2}
\end{figure}
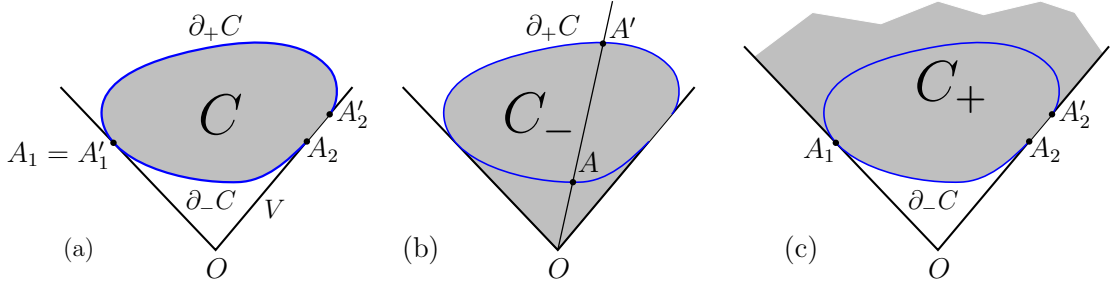

 We have $C = C_- \cap C_+$; see Fig.~\ref{fig2}.

Place the origin at the point $O$; then $tC$ designates the dilation of $C$ with the center at $O$ and the ratio $t$. The set $C(s)$ now takes the form (see Fig.~\ref{fig31})
\beq\label{C(s)}
C(s) = \left\{ \begin{array}{ll}
C_- \cap \sqrt{1-s}\, C_+, & \text{if} \ s < 1\\
C_-, & \text{if} \ s = 1. \end{array} \right.\eeq
In particular, $C(0) = C_- \cap C_+ = C$ and $C(1) = C_- = \text{Conv}(C \cup \{ O \})$.
Since $C_1 \subset C \subset C(s) \subset \text{Conv}(C \cup \{ O \}) \subset C_2$, the bodies $C(s)$, $0 \le s \le 1$ are admissible.

      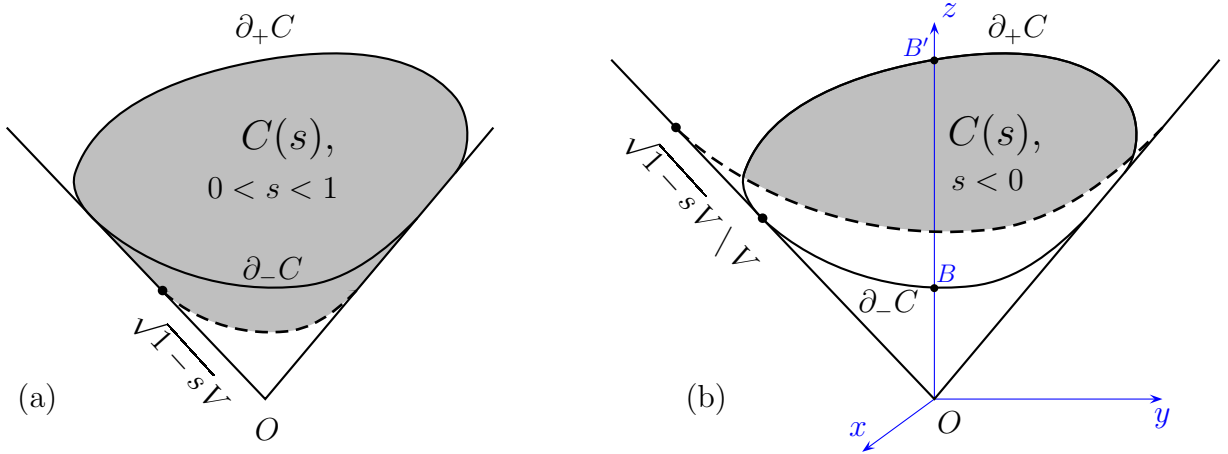
\begin{figure}[h]
\begin{picture}(0,170)
\rput(3.4,3.8){
\scalebox{1}{
\pspolygon[linewidth=0pt,fillstyle=solid,fillcolor=lightgray](-2.24,-0.63)(2,-0.6)(1.2,-1.56)(0,-1.7)(-1.344,-1.578)
\rput(0.03,-3.4){\scalebox{1}{$O$}}
\rput(0,1.9){\scalebox{1}{$\pl_+C$}}
\rput(-3,-3){\scalebox{1}{(a)}}
\psecurve[linecolor=black,linewidth=1pt,fillstyle=solid,fillcolor=lightgray,linestyle=dashed]
(-1.5,-1.2)(-1.344,-1.578)(0.3,-2.1)(1.2,-1.56)(1.398,-1.2)
\psecurve[linecolor=black,linewidth=0.8pt,fillstyle=solid,fillcolor=lightgray]
(1.5,-0.25)(2.5,0)(2.5,1)(0,1.5)(-2.5,0)(-2.24,-0.63)(0.5,-1.5)(2,-0.6)(2.33,0)
\rput(0.3,0.45){\scalebox{1.3}{$C(s),$}}
\rput(0.1,-0.2){\scalebox{1}{$0 < s < 1$}}
\rput(0.1,-1.3){\scalebox{1}{$\pl_-C$}}
\psline[linewidth=0.8pt]
(-3.4,0.6)(0,-3)(2.5,0)(3,0.6)
   \psdots(-1.35,-1.56)
   \rput{-48}(-1.1,-2.5){$\sqrt{1-s}\, V$}
}}
\rput(12.2,3.8){
\scalebox{1}{
\psecurve[linecolor=black,linewidth=0.8pt,fillstyle=solid,fillcolor=lightgray]
(1.5,-0.25)(2.5,0)(2.5,1)(0,1.5)(-2.5,0)(-2.24,-0.63)(0.5,-1.5)(2,-0.6)(2.33,0)
\pscurve[linecolor=white,linewidth=0pt,fillstyle=solid,fillcolor=white]
(0,-3)(-4.4,1.5)(-4.3,1.7)(-4,1.7)(-3.75,1.5)(-3.36,0.555)(0.75,-0.75)(3,0.6)(3.495,1.5)(3.7,1.7)(3.9,1.7)(4.1,1.3)(4,0)(0,-3)
     \psline[linecolor=blue,linewidth=0.4pt,arrows=->,arrowscale=1.5](0,-3)(0,2)
     \rput(0.2,2.15){\scalebox{1}{$\blue z$}}
          \psline[linecolor=blue,linewidth=0.4pt,arrows=->,arrowscale=1.5](0,-3)(3,-3)
     \rput(3,-3.25){\scalebox{1}{$\blue y$}}
          \psline[linecolor=blue,linewidth=0.4pt,arrows=->,arrowscale=1.5](0,-3)(-0.95,-3.7)
     \rput(-1,-3.4){\scalebox{1}{$\blue x$}}
\rput(0.2,-3.3){\scalebox{1}{$O$}}
\rput(1.1,1.9){\scalebox{1}{$\pl_+C$}}
\rput(-0.6,-1.75){\scalebox{1}{$\pl_-C$}}
\rput(0.16,-1.33){\scalebox{0.8}{$\blue B$}}
\rput(-0.24,1.67){\scalebox{0.8}{$\blue B'$}}
\rput(-3,-3){\scalebox{1}{(b)}}
\psdots[dotsize=3pt](0,-1.53)(0,1.49)
\psecurve[linecolor=black,linewidth=1pt,linestyle=dashed]
(-3.75,1.5)(-3.36,0.555)(0.75,-0.75)(3,0.6)(3.495,1.5)
\psecurve[linecolor=black,linewidth=0.8pt]
(1.5,-0.25)(2.5,0)(2.5,1)(0,1.5)(-2.5,0)(-2.24,-0.63)(0.5,-1.5)(2,-0.6)(2.33,0)
  \psdots(-2.26,-0.6)(-3.4,0.6)
\rput(0.8,0.5){\scalebox{1.3}{$C(s),$}}
\rput(0.7,-0.1){\scalebox{1}{$s < 0$}}
\psline[linewidth=0.8pt](-4.25,1.5)
(-3.4,0.6)(0,-3)(2.5,0)(3,0.6)(3.75,1.5)
   \rput{-48}(-3.2,-0.4){$\sqrt{1-s}\, V \setminus V$}
}}
\end{picture}
\caption{The set $C(s)$ (a) for $0 < s < 1$ and (b) for $s < 0$. The surface $\sqrt{1-s}\ \pl_- C$ is shown as dashed line.}
\label{fig31}
\end{figure}

Note that formula \eqref{C(s)} defines the body $C(s)$ also for the values $s < 0$.

All intervals $(O,\, A)$ and the intervals $(O,\, A')$ contained in the tangent rays do not intersect the interior of $C$, hence by the hypothesis of the lemma, no point of these intervals belongs to $C_1 \cup \overline{{\text{\rm Sing$C$}}}$. The set $\pl_-C$ belongs to the union of these intervals, therefore $\pl_-C$ does not intersect $C_1 \cup \overline{{\text{\rm Sing$C$}}}$. Since both sets, $\pl_-C$ and $C_1 \cup \overline{{\text{\rm Sing$C$}}}$, are compact, for $|s|$ sufficiently small the dilated set $\sqrt{1-s}\ \pl_-C$ also does not intersect $C_1 \cup \overline{{\text{\rm Sing$C$}}}$, and therefore in particular, $C_1 \subset C(s) \subset C_2$, that is, $C(s)$ is admissible.

Let us now study the composite function $F(\pl C(s))$. For $0 \le s \le 1$ this function is linear. Indeed, $\pl C(s)$ is composed of the surfaces $\pl_+ C$, $\sqrt{1-s}\, \pl_-C$, and $V \setminus \sqrt{1-s}\, V$ (note that $\sqrt{1-s}\, V \subset V$). The surface area measure of $\pl C(s)$ is
$\nu_{\pl C(s)} = \nu_{\pl_+C} + (1-s) \nu_{\pl_-C} + s \nu_V.$ It can be represented as
$$
\nu_{\pl C(s)} = \nu_{\pl C} + s \nu_0, \ \ s \in [0,\, 1], \quad \text{where} \ \ \nu_0 = \nu_V - \nu_{\pl_-C}.
$$
Thus, claim (b) of the lemma is proved.

For $0 \le s \le 1$ we have $F(\sqrt{1-s}\, \pl_-C) = (1 - s) F(\pl_-C)$ and $F(V \setminus \sqrt{1-s}\, V) = F(V) - (1-s) F(V) = s F(V)$, therefore
$$
F(\pl C(s)) = F(\pl_+ C) + (1-s) F(\pl_- C) + s F(V).
$$

Let us now show that the derivative $\frac{d}{ds}\big\rfloor_{s=0} F(\pl C(s))$ exists.

The calculation of the right derivative is straightforward,
$$
\frac{d}{ds}\Big\rfloor_{s=0^+} F(\pl C(s)) = \lim_{s\to 0^+} \frac{F(\pl C(s)) - F(\pl C)}{s} = F(V) - F(\pl_- C).
$$

For $s < 0$, the boundary of the convex body $C(s)$ is composed of parts of the surfaces $\pl_+ C$ and $\sqrt{1-s}\ \pl_- C$. Namely,
$$
\pl C(s) = (\pl_+ C \cap \sqrt{1-s}\, C_+) \cup (\sqrt{1-s}\ \pl_- C \cap C_-),
$$
and the complementary parts of these surfaces, $\pl_+ C \setminus \sqrt{1-s}\, C_+$ and $\sqrt{1-s}\, \pl_- C \setminus C_-$, do not take part of the boundary. Therefore we have
$$
F(\pl C(s)) = F(\pl_+ C) + (1-s) F(\pl_- C) - \big[ F(\pl_+ C \setminus \sqrt{1-s}\, C_+) + F(\sqrt{1-s}\ \pl_- C \setminus C_-) \big]
$$
$$
= F(\pl_+ C) + (1-s) F(\pl_- C) + s F(V)
$$
$$
- \big[ s F(V) + F(\pl_+ C \setminus \sqrt{1-s}\, C_+) + F(\sqrt{1-s}\ \pl_- C \setminus C_-) \big].
$$
Therefore, the left derivative (if exists) equals
$$
\frac{d}{ds}\Big\rfloor_{s=0^-} F(\pl C(s)) = \lim_{s\to 0^-} \frac{F(\pl C(s)) - F(\pl C)}{s} = F(V) - F(\pl_- C) - \lim_{s\to 0^+} \frac{R(s)}{s},
$$
where
$$
R(s) = s F(V) + F(\pl_+ C \setminus \sqrt{1-s}\, C_+) + F(\sqrt{1-s}\, \pl_- C \setminus C_-)
$$
$$
= F(\pl_+ C \setminus \sqrt{1-s}\, C_+) + F(\sqrt{1-s}\, \pl_- C \setminus C_-) - F(\sqrt{1-s}\, V \setminus V).
$$

Let us prove that $R(s) = o(s)$ as $s \to 0^-$; it will follow that the derivative $\frac{d}{ds}\big\rfloor_{s=0} F(\pl C(s))$ exists and is equal to $F(V) - F(\pl_- C)$.

Draw a straight line through $O$ intersecting the interior of $C$. Let $B$ and $B'$ be the points of intersection of this line with $\pl C$, so as the open segment $OB$ is outside $C$.

Introduce an ortogonal coordinate system with the coordinates $x$, $y$, $z$ so as the origin is at $O$ and the $z$-axis coincides with the axis $OB$; see Fig.~\ref{fig31}\,(b). Let $D_s$,\, $D_s^+$,\, $D_s^-$, $s < 0$, be the orthogonal projections of $\sqrt{1-s}\, V \setminus V$,\, $\pl_+ C \setminus \sqrt{1-s}\, C_+$,\, $\sqrt{1-s}\, \pl_- C \setminus C_-$, respectively, on the $xy$-plane. The area of $D_s$ equals $-ks$, where $k$ is the area of the corresponding projection of $V$. For $|s|$ sufficiently small, the domains $D_s^+$ and $D_s^-$ have disjoint interiors and $D_s^+ \cup D_s^- = D_s$, hence
$$
\text{Area}(D_s^+) + \text{Area}(D_s^-) = \text{Area}(D_s) = -ks.
$$

Denote by $n(x,y) = (n_1(x,y),\, n_2(x,y),\, n_3(x,y))$ the outward normal to $\sqrt{1-s}\, V \setminus V$ at the pre-image of $(x,y) \in D_s$ under the projection. Similarly, let $n^+(x,y) = (n_1^+(x,y),\, n_2^+(x,y),\, n_3^+(x,y))$ and $n^-(x,y) = (n_1^-(x,y),\, n_2^-(x,y),\, n_3^-(x,y))$ be the outward normals to $\pl_+ C \setminus \sqrt{1-s}\, C_+$ and $\sqrt{1-s}\, \pl_- C \setminus C_-$, respectively. The third components of these vectors, $n_3(x,y),\, n_3^+(x,y),$ and $n_3^-(x,y)$, are negative for $|s|$ sufficiently small.

The function $n(x,y)$ is continuous in $D_s$ and is constant in the radial direction. The function $n^+(x,y)$ coincides with $n(x,y)$ on the inner boundary of $D_s$; in other words, for any $(x,y) \in D_s^+$ there exists $0 < c \le 1$ such that $(cx,cy) \in D_s^+$ and $n(x,y) = n(cx,cy) = n^+(cx,cy)$. The function $n^+(x,y)$ is continuous (and therefore uniformly continuous) in the closure of $D_{s}$ for $|s|$ sufficiently small. Thus, for any $\ve > 0$ there exists $\del > 0$ such that for all $-\del < s < 0$, for all $(x,y) \in D_s^+$, and for a suitable positive $c = c(x,y) \le 1$ we have  $(cx, cy) \in D_s^+$,\, $n(cx,cy) = n^+(cx,cy)$, and $|n^+(x,y) - n^+(cx,cy)| < \ve$; it follows that for any $(x,y) \in $ $D_s^+$,\, $|n^+(x,y) - n(x,y)| < \ve$.

A similar reasoning holds for the function $n^-(x,y)$ with $(x,y) \in D_s^-$. As a result we have
$$
\sup_{(x,y) \in D_s^+} |n^+(x,y) - n(x,y)| + \sup_{(x,y) \in D_s^-} |n^-(x,y) - n(x,y)| \to 0 \quad \text{as} \ \, s \to 0^-.
$$

For $|s|$ sufficiently small the function $p(n) = \frac{f(n)}{|n_3|}$
is well defined, and therefore is uniformly continuous, in the closure of the set
$\{ n(x,y) : (x,y) \in D_s \} \cup \{ n^+(x,y) : (x,y) \in D_s^+ \} \cup \{ n^-(x,y) : (x,y) \in D_s^- \} \subset S^2,$
hence
$$
\sup_{(x,y) \in D_s^+}|p(n(x,y)) - p(n^+(x,y))| +  \sup_{(x,y) \in D_s^-}|p(n(x,y)) - p(n^-(x,y))| =: \al(s) \to 0 \ \ \text{as} \ \, s \to 0^-.
$$

We have
$$
R(s) 
= \int_{\pl_+ C \setminus \sqrt{1-s}\, C_+} f(n_\xi)\, d\xi + \int_{\sqrt{1-s}\, \pl_- C \setminus C_-} f(n_\xi)\, d\xi - \int_{\sqrt{1-s}\, V \setminus V} f(n_\xi)\, d\xi.
$$
Making the change of variable $\xi \rightsquigarrow x, y$ in these integrals and taking into account that $d\xi = \frac{dx\, dy}{|n_3^+(x,y)|}$,\, $d\xi = \frac{dx\, dy}{|n_3^-(x,y)|}$, and $d\xi = \frac{dx\, dy}{|n_3(x,y)|}$ in the first, second, and third integrals, respectively, we get
$$
R(s) = \int_{D_s^+} p(n^+(x,y))\, dx\, dy + \int_{D_s^-} p(n^-(x,y))\, dx\, dy - \int_{D_s} p(n(x,y))\, dx\, dy
$$
$$
= \int_{D_s^+} \left( p(n^+(x,y)) - p(n(x,y)) \right) dx\, dy + \int_{D_s^-} \left( p(n^-(x,y)) - p(n(x,y)) \right) dx\, dy,
$$
and so,
$$
|R(s)| \le \int_{D_s^+} |( p(n^+(x,y)) - p(n(x,y)) )|\, dx\, dy + \int_{D_s^-} |( p(n^-(x,y)) - p(n(x,y)) )|\, dx\, dy $$
$$\le \al(s) \, k|s|= o(s) \ \, \text{as} \ \, s \to 0^-.
$$

It follows that there exists the derivative
$$
\frac{d}{ds}\Big\rfloor_{s=0} F(\pl C(s)) = F(V) - F(\pl_- C).
$$
Since $F(\pl C(s))$ takes the minimal value at $s = 0$, we have $\frac{d}{ds}\big\rfloor_{s=0} F(\pl C(s)) = 0$, therefore $F(\pl C(s))$ is constant for $0 \le s \le 1$. Thus, all bodies $C(s)$, $0 \le s \le 1$ are solutions to problem \eqref{problem}. Claim (a) of the lemma is also proved.
\end{proof}

Let us finish the proof of the theorem.

Let $C$ be a solution to problem \eqref{problem}. Assuming that the set $\text{\rm Ext}C \setminus (\pl C_1 \cup \pl C_2 \cup \overline{\text{\rm Sing$C$}})$ is not empty, we use Lemma \ref{l cones} to obtain an infinite sequence of points $O_1,\, O_2, \ldots$ in $C_2 \setminus C$ such that
(a) for all $i \ne j$ and all $\xi \in \pl C_1 \cup \pl C_2 \cup \overline{{\text{\rm Sing$C$}}}$, the intersections of the open segments $(O_i,\, O_j)$ and $(O_i,\, \xi)$ with the interior of $C$ are not empty;
(b) for $\widehat C = \text{\rm Conv}(C \cup \{ O_1,\, O_2, \ldots \})$ holds
$\text{\rm Ext}\widehat C \subset \pl C_1 \cup \pl C_2 \cup \overline{{\text{\rm Sing$\widehat C$}}}.$

For any $\vec s = (s_1,\, s_2, \ldots ) \in [0,\, 1]^\infty$ denote
$$
C(\vec s) = \cup_i C_i(s_i), \ \ \text{where} \ \ C_i(s) = \bigcup\limits_{\sqrt{1-s} \le \lam \le 1} \big( \lam C + (1 - \lam) O_i \big).
$$

One has $C(\vec 0) = C$ and $C(\vec 1) = \widehat C = \cup_i \text{Conv}(C \cup \{ O_i \})$. Denoting $\vec e_i = (\underbrace{0, \ldots, 0,\, 1}_{i}, 0, \ldots)$, we have $C(\vec e_i) = \text{Conv}(C \cup \{ O_j \})$.
By claim (a) of Lemma \ref{l kolpaki}, $\widehat C = \text{Conv}(C \cup \{ O_1,\, O_2, \ldots \})$, and by claim (b) of Lemma \ref{l cones},
$\text{\rm Ext}\widehat C \subset \pl C_1 \cup \pl C_2 \cup \overline{{\text{\rm Sing$\widehat C$}}}.$

Each set $C(\vec s)$ is convex. This is proved in Appendix 1. We have $C_1 \subset C \subset C(\vec s) \subset C_2$, therefore $C(\vec s)$ belongs to the class of admissible bodies.

Fix $i$ and consider all rays with vertex at $O_i$ intersecting $C$. Denote by $A$ and $A'$ the initial (closer to $O_i$) and the final points of intersection of a generic ray of this kind with $C$. Let $\pl_i^- C$ be the union of the points $A$ corresponding to all rays and the segments $[A,\, A']$ contained in the rays tangent to $C$. Let $V_i$ be the union of all segments $[O_i,\, A']$ contained in the tangent rays.

The closed sets bounded by the subspaces $V_i$ and $\pl_i^- C$ are mutually disjoint. This is be proved in Appendix 2.

The boundary of $C$ is the disjoint union of all surfaces $\pl_i^- C$ and the remaining part of the boundary, $\pl C \setminus (\cup_i \pl_i^- C)$,
$$
\pl C = \cup_i \pl_i^- C  \bigcup \Big( \pl C \setminus (\cup_i \pl_i^- C) \Big).
$$

Denote by $T_i(k)$ the dilation with center $O_i$ and ratio $k$ and consider the convex body $C(s\vec e_i),\, 0 \le s \le 1$. Its boundary is the disjoint union
$$
\pl C(s\vec e_i) = \big( \pl C \setminus \pl_i^- C \big) \cup \Big( T_i(\sqrt{1-s})(\pl_i^- C) \cup \left( V_i \setminus T_i(\sqrt{1-s})(V_i) \right) \Big).
$$
Correspondingly, its surface measure is
$$
\nu_{\pl C(s\vec e_i)} = (\nu_{\pl C} - \nu_{\pl_i^- C}) + (\nu_{T_i(\sqrt{1-s})(\pl_i^- C)} + \nu_{V_i} - \nu_{T_i(\sqrt{1-s})(V_i)})
$$
$$
= (\nu_{\pl C} - \nu_{\pl_i^- C}) + \big( (1-s) \nu_{\pl_i^- C} + \nu_{V_i} - (1-s) \nu_{V_i} \big) = \nu_{\pl C} + s \nu_i,
$$
where $\nu_i = \nu_{V_i} - \nu_{\pl_i^- C}$, and
$$
F(\pl C(s\vec e_i)) = F( \nu_{\pl C}) + s (F(V_i) - F(\pl_i^- C).
$$
Since by Lemma \ref{l stretching}, every convex body $C(s\vec e_i)$ is a solution to problem \eqref{problem}, we have
\beq\label{equal}
F(V_i) - F(\pl_i^- C) = 0. \eeq

In general, the boundary of $C(\vec s),\, \vec s \in [0,\, 1]^\infty$ is the disjoint union of $\pl C \setminus (\cup_i \pl_i^- C)$ and the surfaces $T_i(\sqrt{1-s_i})(\pl_i^- C)$ and $V_i \setminus T_i(\sqrt{1-s_i})(V_i)$ for all values of $i$,
$$
\pl C(\vec s) = \bigcup_i  \Big( T_i(\sqrt{1-s_i})(\pl_i^- C) \cup \left( V_i \setminus T_i(\sqrt{1-s_i})(V_i) \right) \Big)
\bigcup \Big( \pl C \setminus (\cup_i \pl_i^- C) \Big);
$$
see Fig.~\ref{fig4} for the case when $\vec s = (\frac{1}{2},\, \frac{1}{2},\, 0,\, 0, \ldots)$.
     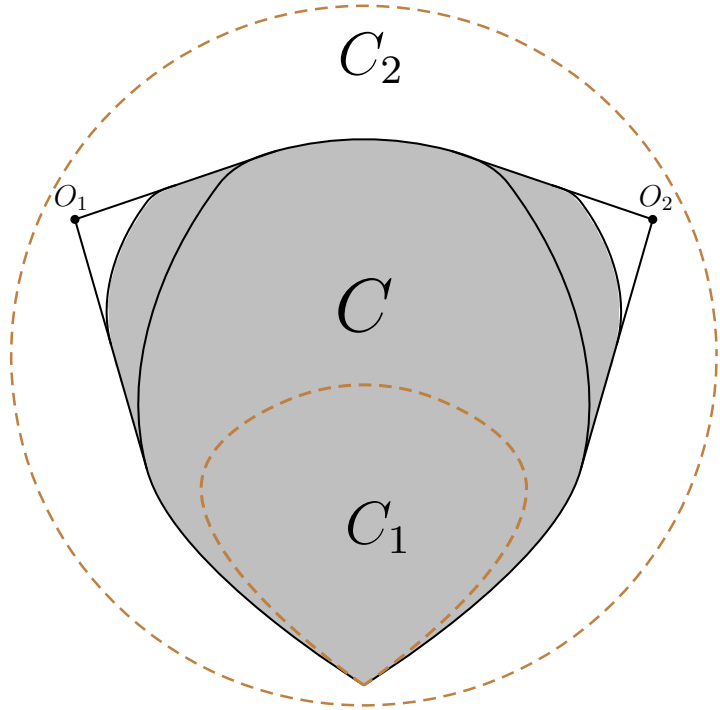
\begin{figure}[h]
\begin{picture}(0,260)
\scalebox{0.95}{
\rput(7.5,0.25){
  \psdots(-4,6.5) (4,6.5)
    \pscurve[linecolor=blue,linestyle=dashed,linewidth=0.8pt,fillstyle=solid,fillcolor=lightgray,linewidth=0pt,linecolor=white]
    (0,0)(-3,3)(-2,7)(-1.2,7.46)(1.2,7.44)(1.9,7.22)  (2.6,6.97)(2.84,6.87)(3,6.72)  (3.5,5.65)(3.5,4.75)(3.25,3.87)   (2.95,3)(0,0)
    \pscurve[linecolor=blue,linestyle=dashed,linewidth=0.8pt,fillstyle=solid,fillcolor=lightgray,linewidth=0pt,linecolor=white]
    (0,0)(3,3)(2,7)(1.2,7.46)(-1.2,7.44)(-1.9,7.22)  (-2.6,6.97)(-2.84,6.87)(-3,6.72)  (-3.5,5.65)(-3.5,4.75)(-3.25,3.87)   (-2.95,3)(0,0)
\pscurve[linecolor=black,linewidth=0.8pt,fillstyle=solid,fillcolor=lightgray](0,0)(-3,3)(-2,7)(-1.2,7.46)(1.2,7.46)(2,7)(3,3)(0,0)
\pscurve[linecolor=brown,linestyle=dashed,linewidth=1.2pt](0,0)(-2.25,2.8)(-1,4)(1,4)(2.25,2.8)(0,0)
  \psecurve[linecolor=black,linewidth=0.8pt](1.4,6.98)(2.6,6.98)(3,6.75)(3.5,4.75)(2,3.25)
  \psecurve[linecolor=black,linewidth=0.8pt](-1.4,6.98)(-2.6,6.98)(-3,6.75)(-3.5,4.75)(-2,3.25)
  \psline(4,6.5)(1.2,7.46)
  \psline(4,6.5)(3,3)
    \psline(-4,6.5)(-1.2,7.46)
  \psline(-4,6.5)(-3,3)
   \rput(4.05,6.8){\scalebox{1}{$O_2$}}
   \rput(-4.05,6.8){\scalebox{1}{$O_1$}}
   \rput(0,5.3){\scalebox{2.5}{$C$}}
   \rput(0.2,2.2){\scalebox{2}{$C_1$}}
   \rput(0.1,8.75){\scalebox{2}{$C_2$}}
   \pscircle[linecolor=brown,linestyle=dashed,linewidth=1pt](0,4.6){4.9}
}
}
\end{picture}
\caption{The body $C(\vec s)$ with $\vec s = (\frac{1}{2},\, \frac{1}{2},\, 0,\, 0, \ldots)$ is shown in gray. The body $C(\vec 0)$ coincides with $C$. The body $C(\vec 1)$ is bounded by the closed line through $O_1$ and $O_2$. The bodies $C_1$ and $C_2$ are bounded by dashed lines.}
\label{fig4}
\end{figure}
Correspondingly, the surface measure is
$$
\nu_{\pl C(\vec s)} = \sum_i \Big( \nu_{T_i(\sqrt{1-s_i})(\pl_i^- C)} + \nu_{V_i} - \nu_{T_i(\sqrt{1-s_i})(V_i)} \Big) +
\Big( \nu_{\pl C} - \sum_i \nu_{\pl_i^- C} \Big)
$$
$$
= \sum_i \Big( (1-s_i) \nu_{\pl_i^- C} + \nu_{V_i} - (1-s_i) \nu_{V_i} \Big) + \Big( \nu_{\pl C} - \sum_i \nu_{\pl_i^- C} \Big)
= \nu_{\pl C} + \sum_i s_i (\nu_{V_i} - \nu_{\pl_i^- C}).
$$
Hence we have
$$
\nu_{\pl C(\vec s)} = \nu_{\pl C} + \sum_i s_i \nu_i \quad \text{with} \quad \nu_i = \nu_{V_i} - \nu_{\pl_i^- C}
$$
and
$$
F(\pl C(\vec s)) = F(\pl C) + \sum_i s_i (F(V_i) - F(\pl_i^- C)).
$$
Using \eqref{equal}, one obtains $F(\pl C(\vec s)) = F(\pl C)$, that is, for every $\vec s$ the convex body $C(\vec s)$ is a solution to problem \eqref{problem}.

\section*{Acknowledgements}

This work is supported by CIDMA through FCT (Funda\c{c}\~ao para a Ci\^encia e a Tecnologia), reference UIDB/04106/2020.

\section*{Appendix 1}

Since all sets $C_i(s_i)$ are convex, it suffices to show that for all $i \ne j$, $x_i \in C_i(s_i)$ and $x_j \in C_j(s_j)$, the segment $[x_i,\, x_j]$ belongs to $C_i(s_i) \cup C_j(s_j)$.

We have
\beq\label{for}
x_i = \mu_1 \hat x_i + (1-\mu_1) O_i \quad \text{and} \quad x_j = \mu_2 \hat x_j + (1-\mu_2) O_j
\eeq
for some $\hat x_i,\, \hat x_j \in C$, $\sqrt{1-s_i} \le \mu_1 \le 1$,\, $\sqrt{1-s_j} \le \mu_2 \le 1$. If $\mu_1 = 1$ or $\mu_2 = 1$ then the segment $[x_i,\, x_j]$ belongs to $C_j(s_j)$ or $C_i(s_i)$, respectively. If, otherwise, both $\mu_1$ and $\mu_2$ are not equal to 1 then for some $0 < \lam < 1$ the point $x_0 = \lam O_i + (1-\lam) O_j$ lies in $C$.

Take the point $\bar x = \tilde\lam x_i + (1 - \tilde\lam) x_j$ with
$$
\tilde\lam = \frac{\frac{\lam}{1-\mu_1}} {\frac{\lam}{1-\mu_1} + \frac{1-\lam}{1-\mu_2}},
\qquad
1 - \tilde\lam = \frac{\frac{1-\lam}{1-\mu_2}} {\frac{\lam}{1-\mu_1} + \frac{1-\lam}{1-\mu_2}}.
$$
Using formula \eqref{for}, one sees that $\bar x$ is a convex combination of the points $x_0$, $\hat x_i$, $\hat x_j$, and therefore, lies in $C$.

Thus, the segment $[x_i,\, x_j]$ is divided by the point $\bar x$ into two parts $[x_i,\, \bar x]$ and $[\bar x,\, x_j]$, with $x_i \in C_i(s_i)$,\, $x_j \in C_j(s_j)$,\, $\bar x \in C$. Hence the former segment belongs to $C_i(s_i)$ and the latter one belongs to $C_j(s_j)$.

Thus, $C(\vec s)$ is convex.

\section*{Appendix 2}

Let us show that for $i \ne j$,

(a) the segments $[O_i,\, A_i]$ and $[O_j,\, A_j]$ contained, respectively, in rays from $O_i$ and $O_j$ are disjoint;

(b) the segments $[O_i,\, A'_i]$ and $[O_j,\, A'_j]$ contained in tangent rays from $O_i$ and $O_j$ are disjoint.

(a) Suppose that $A_i = A_j$. The points $O_i,\, O_j, \, A_i$ are not collinear, since otherwise, using that the interval $(O_i,\, O_j)$ contains an interior point of $C$, one sees that one of the of the open intervals $(O_i,\, A_i)$ and $(O_j,\, A_j)$ intersects $C$, which is impossible.

Since $A_i \in C$ and a point of $(O_i,\, O_j)$ is in the interior of $C$, we conclude that the intersection of $C$ with the plane $O_i O_j A_i$ is a planar convex body contained in the angle $O_i A_i O_j$, and both lines $O_i A_i$ and $O_j A_i$ are lines of support to this planar body. It follows that the plane $O_i O_j A_i$ is tangent to $C$. This contradicts to the condition that $(O_i,\, O_j)$ contains an interior point of $C$.

Let now $A_i \neq A_j$, and take a point of intersection $\xi$ of the segments $[O_i,\, A_i)$ and $[O_j,\, A_j)$. Using that $[A_i,\, A_j] \subset C$,\, $(A_i,\, \xi] \cap C = \emptyset$,\, $(A_j,\, \xi] \cap C = \emptyset$, one concludes that the points $A_i$, $A_j$, $\xi$ are not collinear.

A point of $(O_i,\, O_j)$ (let it be $O$ and the segment $[A_i,\, A_j]$ are contained in $C$. The point $\xi$ belongs to the planar triangle $O A_i A_j$, hence it also belongs to $C$, which is impossible.

(b) Let $\xi$ be the point of intersection of the segments $[O_i,\, A'_i]$ and $[O_j,\, A'_j]$ (they are not collinear, since otherwise the tangent line $O_j O_j$ does not intersect the interior of $C$). The point $\xi$ is interior for both segments $[O_i,\, A'_i]$ and $[O_j,\, A'_j]$, since otherwise we have either $\xi = A'_i$ or $\xi = A'_j$, and the plane through $\xi,\, A'_i,\, A'_j$ is tangent to $C$, in contradiction with the condition that the segment $[O_i,\, O_j]$ contains an interior point of $C$.

Again, for some $0 < \lam < 1$ the point $O := \lam O_i + (1-\lam) O_j$ lies in the interior of $C$. Besides, the segment $[A'_i,\, A'_j]$ is contained in $C$. The point $\xi$ is interior for the planar triangle $O A'_i A'_j$, hence it belongs to the interior of $C$. This contradicts the fact that the segments $[O_i,\, A'_i]$ and $[O_j,\, A'_j]$ do not intersect the interior of $C$.

\end{document}